\DeclareMathOperator{\ord}{ord}
\newcommand{\cA}{\mathcal{A}}
\newcommand{\cB}{\mathcal{B}}
\newcommand{\cD}{\mathcal{D}}
\newcommand{\cE}{\mathcal{E}}
\newcommand{\cF}{\mathcal{F}}
\newcommand{\cG}{\mathcal{G}}
\newcommand{\cH}{\mathcal{H}}
\newcommand{\cO}{\mathcal{O}}
\newcommand{\cW}{\mathcal{W}}
\newcommand{\cM}{\mathcal{M}}
\newcommand{\cZ}{\mathcal{Z}}
\newcommand{\bbZ}{\mathbb{Z}}
\newcommand{\bbC}{\mathbb{C}}
\newcommand{\coC}{\mathbf{C}}
\def\Spec{{\rm Spec}}
\def\dres{\partial{\rm Res}}
\def\sdres{{\rm s}\partial{\rm Res}}
\def\ord{\rm ord}
\newcommand{\gd}{{\rm GD}}
\def\Ker{\rm Ker}
\def\gcrd{\rm gcrd}
\def\gcd{\rm gcd}
\def\BC{\texttt{BC}}
\def\ec{e_{\mathcal{A}}}
\def\rank{{\rm rank}}
\def\0{{\bf 0}}
\newtheorem{thm}{Theorem}[section]
\newtheorem{lem}[thm]{Lemma}
\newtheorem{cor}{Corollary}[thm]
\theoremstyle{definition}
\newtheorem{example}[thm]{Example}
\theoremstyle{remark}
\newtheorem{rem}[thm]{Remark}
\newtheorem*{remark}{Remark}
\newcommand{\centr}{\mathcal{Z}}
\title
{On the classification of centralizers of ODOs:\\ An effective differential algebra approach.}
\author[a]{Sonia L. Rueda}
\affil[a]{Universidad Politécnica de Madrid}
\date{}
\begin{document}

\maketitle

\begin{abstract}
The correspondence between commutative rings of ordinary differential operators  and algebraic curves has been extensively and deeply studied since the seminal works of Burchnall-Chaundy in 1923. 
This paper is an overview of recent developments to make this correspondence computationally effective, by means of differential algebra and symbolic computation. 

The centralizer of an ordinary differential operator $L$ is a maximal commutative ring, the affine ring of a spectral curve $\Gamma$.
The defining ideal of $\Gamma$ is described, assuming that a finite set of generators of the centralizer is known. In addition, the factorization problem of $L-\lambda$ over a new coefficient field, defined by the coefficient field of $L$ and $\Gamma$, allows the computation of spectral sheaves on $\Gamma$. The ultimate goal is to develop Picard-Vessiot theory for spectral problems $L(y)=\lambda y$, for a generic $\lambda$.
\end{abstract}

\textsc{\href{https://zbmath.org/classification/}{MSC[2020]}:}  
13N10, 13P15, 12H05

\textit{Keywords: centralizer, ordinary differential operator, spectral curve, differential resultant, Picard-Vessiot field}


\section{Introduction}\label{sec:introduction}

The centralizer $\cZ(L)$ of a nonzero ordinary differential operator (ODO) $L$, in the ring of differential operators, is a commutative algebra.
This result was first proved by I. Schur in 1904 \cite{Sch} and is one of the characteristic
features of ordinary differential operators, with coefficients in a commutative differential ring, with no nonzero nilpotent elements \cite{Good}. 
Schur’s result does not hold for other algebras such as ODOs with matrix coefficients \cite{PRZ2023}  or the algebra of partial differential operators \cite{KasmanPreviato2001}, \cite{Zheglov2020}.

The classification of all commutative algebras of ODOs has been studied by many authors and in diverse context of motivations, including Burchnall and Chaundy \cite{BC1}, Gelfand and Dikii \cite{GD}, Krichever \cite{K77bis}, \cite{K78}, Mumford \cite{Mum2}, Segal–Wilson \cite{SW}, Previato-Wilson \cite{PW}, Verdier \cite{Ve}, Mulase \cite{Mulase} and more recently Zheglov \cite{Zheglov}. For ODOs, whose coefficients are formal power series, a bijective correspondence was established between the following sets of objects in \cite{Mulase}:
The set of equivalence classes of commutative algebras $\cA$ of ordinary differential operators with a monic element, (for an invertible function $\sigma$, the algebras $\cA$ and $\sigma \cA \sigma^{-1}$ are identified); The set of equivalence classes of  isomorphism classes of {\sf quintets} consisting of an algebraic curve and on it a point, a vector bundle, a local covering and a local trivialization. This is achieved via a contravariant functor, generalization of the 
Krichever map, establishing an equivalence between the category of quintets and the category of infinite dimensional vector spaces corresponding
to certain points of Grassmannians together with their stabilizers, the so called {\sf Schur pairs} \cite{Zheglov}. Mulase's work \cite{Mulase} was the last one of a long research history, more details can be found in the introduction of \cite{PW} or \cite{Zheglov}.

Let us consider an ODO $L$ with coefficients in a differential field $\Sigma$, whose field of constants is algebraically closed and of zero characteristic.
Centralizers are affine rings of algebraic curves, $\Spec (\cZ(L))$ and one of our goals is to compute the defining ideal of this abstract algebraic curve using the differential resultant of two differential operators \cite{Cha}. It was proved by E. Previato \cite{Prev} and G. Wilson \cite{Wilson} that the defining polynomial of the planar spectral curve of the algebra $\coC [P,Q]$ generated by two commuting operators is defined by the square-free part $f(\lambda,\mu)$ of the differential resultant
\[h(\lambda,\mu)=\dres(P-\lambda, Q-\mu).\]
Moreover $f$ is a Burchnall-Chaundy (BC) polynomial, since $f$ has constant coefficients and $f(P,Q)=0$.

An algorithm has been designed in \cite{JR2025}, to compute a finite basis of $\cZ (L)$ as a $\coC [L]$-module and to compute families of ODOs  of arbitrary order with non-trivial centralizer $\cZ(L)\neq \coC [L]$, based on K. Goodearl's results \cite{Good} and the computation of the Gelfand-Dickey hierarchies implemented in \cite{JRZHD2025}. We use these bases and differential resultants to approach the computation of the defining ideal of $\Spec (\cZ(L))$. So far it has been achieved successfully in the case of centralizers of ODOs of order $3$ in \cite{RZ2024}. In this paper, we present results to extend our methods to commutative algebras $\cA$ of ODOs, containing two operators of coprime order. These are algebras of rank $1$, being the rank the greatest common divisor of all orders.  ODOs with centralizers of rank $1$ are also called algebro-geometric \cite{We}. In general, it is  non-trivial to determine effectively if an algebra of commuting operators has rank $1$. In many occasions this can be detected through an effective computation of the centralizer \cite{JR2025}, but in some cases, the algorithm in \cite{JR2025} cannot guarantee to compute the whole centralizer but only a maximal subalgebra of rank $r$, being $r$ the rank of an initially known pair of commuting operators.

The differential Galois group of $L-\lambda$ for a generic $\lambda$, \textcolor{teal}{was defined in \cite{BEG} } and the centralizer of $L$ was proved to have rank $1$ if the group is commutative.  The ultimate goal of our research project is to develop the Picard-Vessiot theory of spectral problems $L(y)=\lambda y$ for algebro-geometric ODOs, and give algorithms to compute differential Galois groups. Together with J.J. Morales Ruiz and M.A. Zurro, the spectral Picard-Vessiot field of $L(y)=\lambda y$ was described for an algebro-geometric Schr\" odinger operators $L$ \cite{MRZ2}.  
This problem is linked to a 
factorization problem over a new coefficient field $\Sigma(\Gamma_{\cA})$ defined by the spectral curve $\Gamma_{\cA}$ of the centralizer $\cA=\cZ(L)$, whose field of constants is not algebraically closed.

\medskip

{\bf The paper is organized as follows.} In Section \ref{sec-basis} we review results of K. Goodearl in \cite{Good} to describe the basis of a centralizer $\cZ(L)$ as a $\coC [L]$-module. For operators of prime order,  Theorem \ref{thm-centralizer} describes the basis of the centralizer and some conclusions are derived on the notion of $\rank (\cZ(L))$. In general it is not easy to decide if a centralizer is trivial $\cZ(L)=\coC[L]$, but for differential operators of prime order in the first Weyl algebra, we can give an easy criteria Corollary \ref{cor-cenWeyl}. 

Section \ref{sec-Scurve} is devoted to the description of an isomorphism between $\cA$ and the coordinate ring of the spectral curve $\Gamma_{\cA}$. This construction allows to define the ideal $\BC(\cA)$ of the spectral curve, which is the ideal of all Burchnall-Chaundy polynomials. We review some classical results about the computation of $\BC(\cA)$ using differential resultants in the case of planar spectral curves in Theorem \ref{thm-hPQ} and recent results on the case of space spectral curves in Theorem \ref{thm-BCL}. 

The goal of Section \ref{sec-Ssheaves} is to define a vector bundle $\cF_{\cA}$ on the points of the spectral curve $\Gamma_{\cA}$, and describe its computation using differential subresultants, for planar and space spectral curves in theorems 
\ref{thm-SSPlanar} and \ref{thm-SSSpatial}. This problem is a factorization problem of $L-\lambda$ over the field of the curve $\Sigma (\Gamma_{\cA})$. In the last section, Section \ref{sec-PV}, the spectral Picard-Vessiot field of the spectral problem $L(y)=\lambda y$ is described for operators $L$ of orders $2$ and $3$, emphasizing that they are differential extensions of a non algebraically closed field. 

\medskip

The examples throughout the paper where computed using Maple 2024 and the package \texttt{dalgebra} in SageMath, designed to perform symbolic calculations in differential domains. The algorithms to compute the almost commuting basis and the GD hierarchies in the ring of ODOs, are implemented in the  module \texttt{almost\_commuting.py} ~\cite{JRZHD2025,SAGE2023}. This software is publicly available and can be obtained from the following link:
\begin{center}\url{https://github.com/Antonio-JP/dalgebra/releases/tag/v0.0.5}\end{center}

\medskip

{\bf Notation.} For concepts in differential algebra, we refer to \cite{Kolchin, Ritt} and for Picard-Vessiot theory to \cite{VPS}, \cite{Morales}. 
Let $\Sigma$ be an ordinary differential field with derivation $\partial$, whose field of constants $\coC$ is algebraically closed and of zero characteristic. In this paper we will consider examples with coefficients in the differential ring $\bbC[x]$ of complex coefficient polynomials and the differential field  of rational functions $\bbC (x)$ with derivation $d/dx$, and also in the differential ring $\bbC \{\eta\}$ with $\eta=\cosh(x)$, whose differential field is denoted $\bbC \langle \eta\rangle$.
We will also mention the domain $\bbC[[x]]$ of formal power series with complex number coefficients, which is a differential ring with $d/dx$. We denote by $\bbC((x))$ its quotient field, the field of formal Laurent series.

Let us denote by $\Sigma((\partial^{-1}))$ the ring of pseudo-differential operators in $\partial$ with coefficients in  $\Sigma$
\[
\Sigma((\partial^{-1}))=\left\{\sum_{-\infty< i\leq n} a_i\partial^i\mid a_i\in  R , n\in\bbZ\right\},
\]
where $\partial^{-1}$ is the inverse of $\partial$ in $R((\partial^{-1}))$, $\partial^{-1}\partial=\partial\partial^{-1}=1$.
We denote by $\cD=\Sigma [\partial]$ the non-commutative ring of linear differential operators and recall that its ideals are principal. 
In fact $\cD$ admits Euclidean division. 
Given ~$L$,~$M$ in $\cD$, if $\ord(M)\geq \ord(L)$ then $M=qL+r$ with $\ord(r)<\ord(L)$, $q,r\in \Sigma[\partial]$. Let us denote by $\gcrd(L,M)$ the greatest common (right) divisor of $L$ and $M$.

\section{Basis of centralizers}\label{sec-basis}

Given an ordinary differential operator $L$ in $\Sigma [\partial]$, let us consider the centralizer of $L$ in $\Sigma[\partial]$, 
\begin{equation*}
    \cZ(L)=\{A\in \Sigma [\partial]\mid LA=AL\},
\end{equation*}
in the non-trivial case of operators $L\in \Sigma[\partial]\backslash \coC[\partial]$.
In most cases $\cZ(L)=\coC [L]$, but we will study the special case where $\coC [L]$ is strictly included in $\cZ (L)$ and say that {\sf the centralizer is non-trivial}.

We will study the structure of $\cZ(L)$ as a $\coC[L]$-module, as it will allow us to describe $\cZ(L)$ by a finite basis. For this purpose
we will review next \cite[Theorem 1.2]{Good}.  Let us assume that $\ord(L)=n$ and define the $\coC [L]$-submodules 
\[\cZ_i=\{Q\in\cZ (L)\mid \ord(Q)\equiv_n i\}, i=0,1,\ldots ,n-1.\] 
If $\cZ_i$ is non empty then there exists $A_i$ in $\cZ_i$ of minimal order.
Observe that $\cZ_0=\coC[L]$ and $A_0=1$. By \cite[Theorem 1.2]{Good}, the centralizer $\cZ(L)$ is a $\coC[L]$-module with basis
\begin{equation}\label{eq-basis}
    \cB(L)=\{A_i\mid \cZ_i\neq \emptyset, i=0,1,\ldots ,n-1\}.
\end{equation}
In addition, the next theorem holds, we include its proof for completion, due to its importance for the results in this paper.

\begin{thm}\label{thm-rank}
Given a nonzero differential operator $L$ in $\Sigma[\partial]$, the cardinal of a basis of $\cZ(L)$ as a $\coC[L]$-module divides $\ord(L)$. Moreover
\[\cZ(L)=\oplus_{i\in I} \coC[L] A_i\]
where $I=\{i\in\{1,\ldots ,n-1\}\mid \cZ_i\neq \emptyset\}$.
\end{thm}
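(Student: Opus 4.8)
The plan is to derive both assertions from a single structural fact: the set of order-residues realized by the centralizer is a subgroup of $\bbZ/n\bbZ$. Write $n=\ord(L)$ and let $S=\{i\in\{0,1,\ldots,n-1\}\mid \cZ_i\neq\emptyset\}$, so that $S$ is exactly the index set of the basis $\cB(L)$ from \cite[Theorem 1.2]{Good} and $0\in S$ since $\cZ_0=\coC[L]\ni 1$. Once $S$ is shown to be a subgroup, the divisibility $|\cB(L)|=|S|\mid n$ is immediate from Lagrange's theorem, and the decomposition is the unpacking of the word ``basis''.

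I would first record the two order facts in $\cD=\Sigma[\partial]$ that do all the work: since $\Sigma$ is a field, $\cD$ is a domain with $\ord(AB)=\ord(A)+\ord(B)$ for nonzero $A,B$; and $\cZ(L)$ is closed under products, because $L(AB)=(LA)B=(AL)B=A(LB)=A(BL)=(AB)L$ whenever $A,B\in\cZ(L)$. Hence if $i,j\in S$ and $Q_i\in\cZ_i$, $Q_j\in\cZ_j$ are nonzero, then $Q_iQ_j$ is a nonzero element of $\cZ(L)$ with $\ord(Q_iQ_j)\equiv i+j\pmod n$, so $(i+j)\bmod n\in S$. Thus $S$ is a nonempty subset of the finite group $(\bbZ/n\bbZ,+)$ closed under addition, hence a subgroup, and $|S|\mid n$ follows. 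For the direct-sum statement, linear independence over $\coC[L]$ is again pure order bookkeeping: in a relation $\sum_{i\in S}p_i(L)A_i=0$ each nonzero summand has order $\equiv i\pmod n$ (because $\ord(p_i(L))$ is a multiple of $n$), so the nonzero summands have pairwise distinct orders and the highest one cannot be cancelled, forcing every $p_i(L)A_i=0$ and, since $\cD$ has no zero divisors, every $p_i(L)=0$.

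Spanning, namely that every $Q\in\cZ(L)$ lies in $\sum_{i\in S}\coC[L]A_i$, is exactly the content of \cite[Theorem 1.2]{Good}, which I would invoke rather than reprove; together with independence it yields $\cZ(L)=\bigoplus_{i\in S}\coC[L]A_i$, the summand at $i=0$ being $\coC[L]A_0=\coC[L]$. The only genuinely delicate point in a from-scratch argument would be this spanning half: one has to reduce the order of $Q$ by subtracting suitable $c\,L^kA_i$, and for that subtraction to lower the order one needs the leading coefficient of an arbitrary element of $\cZ_i$ to be a $\coC$-multiple of $\lc(L)^k\lc(A_i)$. Since this leading-coefficient control is precisely what \cite{Good} establishes, I would keep the self-contained part of the proof to the subgroup observation, which is what makes the divisibility transparent, and cite Goodearl for the rest.
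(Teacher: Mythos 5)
Your proof is correct and follows essentially the same route as the paper: both arguments rest on the observation that the set of order-residues $\ord(A)+n\bbZ$ realized by nonzero elements of $\cZ(L)$ is closed under addition (via $\ord(AB)=\ord(A)+\ord(B)$ in the domain $\Sigma[\partial]$), hence a subgroup of $\bbZ/n\bbZ$, so its cardinal divides $n$. You additionally spell out the linear-independence-by-order-bookkeeping and the appeal to Goodearl for spanning, which the paper leaves implicit by citing \cite[Theorem 1.2]{Good} before the statement; this is a faithful elaboration rather than a different argument.
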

\begin{proof}
Observe that the set $\cO=\{\ord(A)+n\bbZ\mid A\in \cZ(L), A\neq 0\}$ is closed under addition, it is a cyclic subgroup of $\bbZ/n\bbZ$. Therefore the cardinal of $\cO$ divides $n$. A bijection between the basis $\cB(L)$ defined in \eqref{eq-basis} and $\cO$ sending $A_i$ to $i+n\bbZ$ is established, proving that the cardinal of $\cB(L)$ divides $\ord(L)$.
\end{proof}

In this setting, where the coefficients of differential operators belong to a differential field,  centralizers are commutative domains, see Section \ref{sec-Scurve}. Furthermore centralizers are maximal commutative $\coC$ subalgebras of $\Sigma [\partial]$. Given a commutative subalgebra $\cA$ of $\Sigma[\partial]$, then there exists a maximal commutative subalgebra $\cM$ of $\Sigma [\partial]$ such that
\[\cA \subseteq \cM=\cZ (A), \forall A\in \cA.\]
Thus, for any $A\in\cA$, we can choose an operator $L$ of minimal order within the basis of $\cM$ as a $\coC [A]$-module to compute $\cM$ as $\cZ(L)$.

\medskip

Let us consider a differential operator $L$ in $\Sigma [\partial]\backslash \coC[\partial]$  with non-trivial centralizer $\cZ(L)\neq \coC[L]$. If we assume that $L$ has prime order $p$ 
then we can be more precise about the structure of $\cZ(L)$ as a $\coC[L]$-module.

\begin{thm}\label{thm-centralizer}
Let $L$ be an operator in $\Sigma [\partial]\backslash \coC[\partial]$ of prime order $p$ with nontrivial centralizer $\cZ(L)\neq \coC[L]$. Then $\cZ(L)$ is a free $\coC[L]$-module and bases are of the form  
\begin{equation}\label{eq-basisp}
   \cB (L)=\{1, A_{1} ,\ldots , A_{p-1}\},
\end{equation}  
with $A_i$ as defined in \eqref{eq-basis}. In addition
\[\cZ (L)=\coC [L,A_1,\ldots ,A_{p-1}].\]
\end{thm}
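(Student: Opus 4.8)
The plan is to combine the general structure theorem (Theorem~\ref{thm-rank}) with the primality of $p$ to pin down exactly which residue classes can occur.

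First I would invoke Theorem~\ref{thm-rank}, which tells us that the cardinal of the basis $\cB(L)$ equals the cardinal of the cyclic subgroup $\cO = \{\ord(A) + p\bbZ \mid A \in \cZ(L),\, A \neq 0\}$ of $\bbZ/p\bbZ$, and that this cardinal divides $\ord(L) = p$. Since $p$ is prime, the only divisors are $1$ and $p$, so $\cO$ has order either $1$ or $p$. The hypothesis $\cZ(L) \neq \coC[L]$ is precisely what rules out the case $|\cO| = 1$: if $\cO = \{0 + p\bbZ\}$, then every nonzero element of $\cZ(L)$ has order a multiple of $p$, which forces $\cZ(L) = \cZ_0 = \coC[L]$, contradicting non-triviality. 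Hence $|\cO| = p$, meaning $\cO = \bbZ/p\bbZ$ and every residue class $i \in \{0,1,\ldots,p-1\}$ is attained: each $\cZ_i$ is non-empty, so the index set $I = \{1,\ldots,p-1\}$ is full.

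Consequently the basis is $\cB(L) = \{A_0, A_1, \ldots, A_{p-1}\} = \{1, A_1, \ldots, A_{p-1}\}$, which is exactly \eqref{eq-basisp}, and by the decomposition in Theorem~\ref{thm-rank} we get $\cZ(L) = \oplus_{i=0}^{p-1} \coC[L] A_i$, exhibiting $\cZ(L)$ as a free $\coC[L]$-module of rank $p$. For the final assertion $\cZ(L) = \coC[L, A_1, \ldots, A_{p-1}]$, I would observe that the inclusion $\supseteq$ is immediate since each $A_i$ and $L$ lies in the commutative ring $\cZ(L)$, while $\subseteq$ follows because the module decomposition already expresses every element of $\cZ(L)$ as a $\coC[L]$-combination of the $A_i$, and such combinations certainly lie in the subalgebra generated by $L$ and the $A_i$.

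I do not expect any serious obstacle here: the substance is entirely carried by Theorem~\ref{thm-rank}, and the role of primality is merely to collapse the divisor lattice of $p$ to $\{1,p\}$. The only point requiring a moment of care is the logical equivalence between the triviality of the centralizer and $|\cO| = 1$; one must check that $\cZ_0 = \coC[L]$ holds unconditionally (as noted in the text preceding Theorem~\ref{thm-rank}) so that $|\cO| = 1$ genuinely characterizes the trivial case. Everything else is formal.
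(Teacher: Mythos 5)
Your argument is correct and follows essentially the same route as the paper: both rest on Theorem~\ref{thm-rank} to see that the basis cardinality divides $p$, use non-triviality of the centralizer to exclude cardinality $1$, and conclude that every $\cZ_i$ is non-empty. You merely spell out the step the paper leaves implicit, namely that cardinality $1$ would force $\cZ(L)=\cZ_0=\coC[L]$, which is a worthwhile clarification but not a different proof.
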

\begin{proof}
By Theorem \ref{thm-rank} the rank of $\cZ(L)$ as a free $\coC[L]$-module is a divisor of $p$. Since $\cZ(L)\neq \coC[L]$ then the number of elements of the basis of $\cZ(L)$ as a $\coC[L]$-module is $p$. Therefore, by \cite{Good}, Theorem 1.2 and \eqref{eq-basis}, the sets $\cZ_i\neq \emptyset$ for $i=0,1,\ldots ,p-1$. 
\end{proof}

\medskip

The {\sf rank of a commutative set of differential operators} in $\Sigma [\partial]$ is the greatest common divisor of their orders, see \cite{PRZ2019} for a discussion about the rank of a set of differential operators. As a consequence of Theorem \ref{thm-centralizer}, the rank of the centralizer is
\begin{equation}
    \rank (\cZ (L))=\gcd (\ord(L),\ord(A_i)\mid i\in I).
\end{equation}

\begin{cor}\label{cor-AG}
Let $L$ be an operator in $\Sigma [\partial]\backslash \coC[\partial]$ with non-trivial centralizer $\cZ(L)\neq \coC[L]$ and consider a basis $\cB(L)=\{A_i, i\in I\}$ of $\cZ(L)$ as a $\coC[L]$-module. The following are equivalent:
\begin{enumerate}
    \item $\rank(\cZ(L))=1$.
    
    \item There exist operators $P$ and $Q$ in $\{L, A_i\mid i\in I\}$ of coprime order.
\end{enumerate}
\end{cor}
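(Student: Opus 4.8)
The plan is to establish the two implications separately. The direction $(2)\Rightarrow(1)$ is immediate: by definition $\rank(\cZ(L))$ is the greatest common divisor of the orders of all nonzero elements of $\cZ(L)$, and since $P$ and $Q$ both lie in $\cZ(L)$, the rank divides $\gcd(\ord(P),\ord(Q))=1$; hence $\rank(\cZ(L))=1$. (One can equally invoke the displayed formula $\rank(\cZ(L))=\gcd(\ord(L),\ord(A_i)\mid i\in I)$ together with $P,Q\in\{L,A_i\mid i\in I\}$.)

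For the converse $(1)\Rightarrow(2)$, the naive temptation is to argue purely number-theoretically from $\gcd(\ord(L),\ord(A_i)\mid i\in I)=1$, but this is not enough: a finite set of integers may have overall gcd $1$ with no two of them coprime (for instance $\{6,10,15\}$). The extra structure I would exploit is the one uncovered in the proof of Theorem \ref{thm-rank}, namely that the set of attained residues $\cO=\{\ord(A)+n\bbZ\mid A\in\cZ(L),\,A\neq0\}$ is a \emph{cyclic subgroup} of $\bbZ/n\bbZ$, where $n=\ord(L)$. I would write $\cO=\langle d+n\bbZ\rangle$ with $d\mid n$ its least positive generator, so that $\cO$ consists precisely of the multiples of $d$; in particular the attained residues in $\{0,1,\ldots,n-1\}$ are exactly $\{0,d,2d,\ldots,n-d\}$, and since $\cZ(L)\neq\coC[L]$ we have $|\cO|\geq2$, hence $1\leq d<n$ and $d\in I$.

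The next step is to identify the generator $d$ with the rank. On the one hand $d$ divides every order, because each order is congruent mod $n$ to a multiple of $d$ and $d\mid n$; thus $d\mid\rank(\cZ(L))$. On the other hand $\ord(A_d)\equiv d\pmod n$ gives $\gcd(\ord(L),\ord(A_d))=\gcd(n,d)=d$, so $\rank(\cZ(L))\mid d$. Combining, $\rank(\cZ(L))=d$. Under hypothesis $(1)$ this forces $d=1$, whence $\cO=\bbZ/n\bbZ$ and the class $1+n\bbZ$ is attained: the basis element $A_1$ exists with $\ord(A_1)\equiv1\pmod n$, say $\ord(A_1)=1+kn$. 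Then $\gcd(\ord(L),\ord(A_1))=\gcd(n,1+kn)=1$, so taking $P=L$ and $Q=A_1$ yields two operators of coprime order inside $\{L,A_i\mid i\in I\}$, as required.

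The step I expect to be the crux is exactly the one flagged above: upgrading the global condition $\rank=1$ into the existence of a single basis operator whose order is coprime to $\ord(L)$. Its resolution rests entirely on the fact that the attained residues form a \emph{subgroup} of $\bbZ/n\bbZ$ rather than an arbitrary subset, which is what guarantees that $\rank=1$ forces the residue class $1$ to occur; the remaining manipulations are routine divisibility bookkeeping.
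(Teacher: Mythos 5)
Your proof is correct. The paper itself states Corollary \ref{cor-AG} without proof, as an immediate consequence of the formula $\rank(\cZ(L))=\gcd(\ord(L),\ord(A_i)\mid i\in I)$, so there is no official argument to compare against; what you supply is exactly the missing justification. In particular, you correctly isolate the only non-trivial point: $(1)\Rightarrow(2)$ does \emph{not} follow from the gcd formula alone (your example $\{6,10,15\}$ makes this precise), and the fix is the observation already used in the proof of Theorem \ref{thm-rank}, namely that the set $\cO$ of attained order-residues is a cyclic subgroup $\langle d+n\bbZ\rangle$ of $\bbZ/n\bbZ$ with $d\mid n$. Your identification $\rank(\cZ(L))=d$ (via $d\mid n$ forcing $d$ to divide every order, and $\gcd(n,\ord(A_d))=d$ in the other direction) is clean, and then $d=1$ yields the existence of $A_1$ with $\ord(A_1)\equiv 1\pmod n$, so $L$ and $A_1$ have coprime orders. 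The one hypothesis you use implicitly and could flag is that $\cZ(L)\neq\coC[L]$ forces $n\geq 2$ (for $n=1$ the basis reduces to $\{1\}$), which is needed for $1\leq d<n$ and $d\in I$; this is harmless. The direction $(2)\Rightarrow(1)$ is handled correctly and is indeed immediate.
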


Differential operators satisfying the statements of Corollary \ref{cor-AG} are called {\sf algebro-geometric operators}, see for instance \cite{We} or \cite{Grig}.  

\begin{remark}
If $\ord(L)$ is prime then $\rank(\cZ (L))=1$. Otherwise multiple situations can arise. 
    \begin{enumerate}
        \item If $\ord (L)=4$ then $\cB(L)=\{1,A_1,A_2,A_3\}$ and $\rank(\cZ (L))=1$ or, $\cB(L)=\{1,A_2\}$ and $\rank(\cZ(L))=2$.

        \item If $\ord (L)=6$ then $\cB(L)=\{1,A_1,A_2,A_3,A_4,A_5\}$ and $\rank(\cZ (L))=1$,  $\cB(L)=\{1,A_2, A_4\}$ and $\rank(\cZ(L))=2$, or $\cB(L)=\{1,A_3\}$ and $\rank(\cZ(L))=3$.
    \end{enumerate}
\end{remark}


\subsection{Computing ODOs with non-trivial centralizer}

Given a differential operator $L$ in $\Sigma [\partial]$ it is an open problem to determine algorithmically if $\cZ (L)$ is non-trivial.  Although profound and inspiring theoretical studies about the nature of centralizers have been achieved throughout the twentieth century \cite{K77bis}, \cite{Mum2},\cite{Wilson}, \cite{Mulase},\cite{BZ}, \cite{PZ}, new questions arise to achieve the development of practical algorithms. 
To generate explicit examples of ODOs with non-trivial centralizer one could use different methods see \cite{Zheglov}: Krichever's explicit formulae for centralizers of rank one with smooth spectral curve; Wilson's formulae for centralizers of rank one with rational singular spectral curves; the method of deformation of Tyurin parameters from \cite{KN2}, was used by many authors as for instance in \cite{Mi1},
\cite{Mo2}; using quintets as in \cite{Mulase}, see \cite{PZ}, to obtain differential operators with coefficients in C[[x]]. 
A new method is proposed in \cite{JR2025} for computable differential extensions $\coC\langle \eta \rangle$, where $\eta$ is a transcendental element over $\coC\langle \eta \rangle$, say $\eta=\cosh(x)$ or the Weierstrass $\wp$ function.

\medskip

A crucial reason is that the coefficients of $L$ are solutions of nonlinear differential equations. If we restrict to the case of monic ODOs in normal form (the coefficient of $\partial^{n-1}$ is zero)
\[L=\partial^n+u_{2}\partial^{n-2}+\ldots+u_{n-1}\partial+u_n,\]
the following statements are equivalent \cite{Wilson}, \cite{JR2025}:
\begin{enumerate}
    \item $\cZ(L)$ is non-trivial.

    \item $u_2,\ldots ,u_n$ is a solution of a system of the stationary Gelfand-Dickey (GD) hierarchy.
\end{enumerate}
For instance, if $\ord(L)=2$ then $u_2$ must be a solution of one of the equations of the Korteweg-de Vries (KdV) hierarchy and if $n=3$ then $u_2$ and $u_3$ are solutions of a system of the Bousinesq hierarchy. The solutions of the KdV hierarchy are well known \cite{Ve}. It is harder to find families of solutions for the systems of the {\sf stationary Gelfand-Dickey (GD) hierarchy of $L$}, see \cite{Dikii, DS, Wilson} and references therein. For a fixed $m\notin n\bbZ$,  the {\sf $(n,m)$-system of the Gelfand-Dickey hierarchy} is denoted by
\begin{equation}\label{eq-Hnm}
   \gd_{n,m}: \overline{H}_m+\sum_{j\in J_{n,m-1}} c_{j}  \overline{H}_j= \overline{0}, 
\end{equation}
where $\overline{H}_j(U)$ is the row vector $(H_{0,j}(U),\ldots ,H_{n-2,j}(U))$ and
$\{c_j,\ j\in J_{n,m-1}\}$
is a set of algebraic variables, $J_{n,m-1}=\{0,1,\ldots ,m-1\}\backslash n\bbZ$. The system $\gd_{n,m}$ is obtained from the commutator 
\[[A_m,L]=\sum_k \cH_k \partial^{k},\]
for a formal $L$, where the coefficients $u_2,\ldots ,u_n$ are assumed to be differential variables and a generic operator $A_m$ of order $m$ of the $\coC$-vector space of almost commuting operators $W(L)$, for details see \cite{JRZHD2025}. 
The system $\gd_{n,m}$ is obtained forcing the coefficients of $\partial^{k}$ to be zero
\begin{equation}\label{eq-GD}
    \cH_k = H_{k,m}(U) +\sum_{j\in J_{n,m-1}} c_{j}  H_{k,j}(U) \ , \textrm{for } \  k = 0, \dots , n-2,\,\,\, m\geq 2,
\end{equation}
and in row vector form is \eqref{eq-Hnm}.

\medskip

In \cite{Wilson}, the systems $\gd_{n,m}$ are called the modified KdV hierarchy. They play a crucial role in understanding the generalizations of the theory to other affine Kac-Moody algebras see \cite{DS85}, \cite{Wilson81}.

\medskip

In \cite{JR2025}, we use Goodearl's Theorem and our SageMath implementation of GD hierarchies and almost commuting operators in \cite{JRZHD2025} to compute centralizers and generate families of ODOs with non-trivial centralizer.

\begin{example} 
Consider the order $4$ operator given in ~\cite[Example 22]{PZ}.
The centralizer of the next operator of order $4$ 
\begin{align*}
L_4 =\partial^4- 20 \frac{x^3(x^5-120)}{
(x^5 + 30)^2} \partial^2-3000
\frac{x^2(7x^5-90)}{(x^5 + 30)^3} \partial + 18000
\frac{x(3x^{10}-145x^5 + 450)}{
(x^5 + 30)^4},
\end{align*}
was computed with the algorithm in \cite{JR2025} and 
is $\cZ(L_4)=\coC [L_4,A_1,A_2,A_3]$
with
$\ord(A_2)=6$, $\ord(A_3)=7$ and $\ord(A_1)=9$. So even if $\rank(L_4,A_2)=2$ the centralizer $\cZ(L_4)$ is of rank $1$. Computing the basis of $\cZ(L_4)$ allowed to determine that $L_4$ is algebro-geometric. 
\end{example}

\subsection{In the first Weyl algebra}

We can apply Theorem \ref{thm-centralizer} to obtain conclusions on monic differential operators in the first Weyl algebra $\cW$, the algebra of differential operators with polynomial coefficients $\bbC [x][\partial]$, where $\partial=\frac{d}{dx}$. For an operator in $\cW$, the centralizer $\cZ(L)$ is the centralizer in $\cW$.

\begin{cor}\label{cor-cenWeyl}
    Let $L$ be a monic operator in the first Weyl algebra $\cW$ of prime order $n$. Then either $\cZ(L)=\bbC[L]$ or $\cZ(L)=\bbC[\partial+q(x)]$, with $q(x)\in \bbC[x]$. If $n=2$ the centralizer is $\cZ(L)=\bbC[\partial+q(x)]$ whenever 
    $$L=\partial^2+ (2q(x)+c_1)\partial+q(x)c_1 + q(x)^2 + q'(x) + c_2,\,\,\,  c_1,c_2\in \bbC$$
\end{cor}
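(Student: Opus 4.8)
The plan is to reduce the statement to the existence of a first-order operator in $\cZ(L)$ and then identify the whole centralizer. Assume $\cZ(L)\neq\bbC[L]$. Since $n$ is prime, Theorem~\ref{thm-centralizer} gives that $\cZ(L)$ is free of rank $n$ over $\bbC[L]$ with basis $\{1,A_1,\ldots,A_{n-1}\}$, and Corollary~\ref{cor-AG} shows $\rank(\cZ(L))=1$; in particular $\cZ(L)$ contains operators of coprime orders. The heart of the argument is to exhibit inside $\cZ(L)$ a \emph{monic first-order} operator $\partial+q(x)$ with $q\in\bbC[x]$, since everything else follows formally once such an element is available.

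First I would record a rigidity constraint on the orders occurring in the centralizer. Writing $L=\partial^n+\cdots$ monic and $A=a_m(x)\partial^m+\cdots$ for a nonzero $A\in\cZ(L)$, the vanishing of the top-order term of $LA-AL$ is the vanishing of the Poisson bracket of principal symbols $\{\xi^n,a_m(x)\xi^m\}=n\,a_m'(x)\,\xi^{n+m-1}$, forcing $a_m'(x)=0$. Thus every element of $\cZ(L)$ has \emph{constant} leading coefficient, orders add without cancellation, and $S=\{\ord(A)\mid 0\neq A\in\cZ(L)\}$ is a numerical semigroup containing $n$ with $\gcd(S)=1$. Producing a first-order element is then exactly the claim that the least positive element of $S$ equals $1$, equivalently that the spectral curve of $\cZ(L)$ is rational and smooth at its unique point at infinity, i.e. has genus zero.

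The main obstacle is precisely this genus-zero assertion. Abstractly a numerical semigroup containing a prime $n$ and coprime to it (for instance $\langle 2,n\rangle$) can have positive genus, so the conclusion must use that the coefficients are polynomials rather than arbitrary functions. I would establish genus zero from the rigidity of the first Weyl algebra $\cW$: passing to the Bernstein filtration ($\deg x=\deg\partial=1$) with commutative associated graded $\bbC[\bar x,\bar\partial]$, one examines the principal symbols of a coprime commuting pair $P,Q\in\cZ(L)$ and shows the resulting planar spectral curve is forced to be rational. This is the content of the classification of rank-one commutative subalgebras of $\cW$ (Dixmier's analysis of the Weyl algebra, equivalently Wilson's adelic Grassmannian description of rank-one bispectral algebras). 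Granting genus zero we have $1\in S$, so there is a monic $A\in\cZ(L)$ of order one; being monic of order one in $\cW$ it is $A=\partial+q(x)$ with $q\in\bbC[x]$.

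To conclude the general case, note that $\cZ(L)$ is commutative and contains $A=\partial+q$, hence $\cZ(L)\subseteq\cZ(A)$; and the centralizer of a first-order operator is $\cZ(\partial+q)=\bbC[\partial+q]$, as the gauge automorphism of $\bbC(x)[\partial]$ fixing $x$ and sending $\partial$ to $\partial+q$ carries $\cZ(\partial)=\bbC[\partial]$ to $\cZ(\partial+q)$, and one intersects back with $\cW$. Combined with the obvious inclusion $\bbC[\partial+q]\subseteq\cZ(L)$ this yields $\cZ(L)=\bbC[\partial+q(x)]$. For $n=2$ the displayed operator equals $(\partial+q)^2+c_1(\partial+q)+c_2$, so $\partial+q$ commutes with $L$ and lies in $\cZ(L)\setminus\bbC[L]$ (its order is odd); the centralizer is therefore nontrivial and the general case gives $\cZ(L)=\bbC[\partial+q(x)]$. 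Conversely, completing the square in any monic second-order $L$ with nontrivial centralizer recovers exactly this form, which I would note to confirm the family is complete.
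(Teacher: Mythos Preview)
Your overall strategy coincides with the paper's: reduce to $\rank(\cZ(L))=1$ via Theorem~\ref{thm-centralizer}, then invoke a structural theorem about rank-one commutative subalgebras of the first Weyl algebra. The paper does the second step in one line by citing \cite{MakarLimanov2021}, Theorem~1.1, Section~4, Case~1, which states directly that such a centralizer equals $\bbC[\partial+q(x)]$; you instead sketch a genus-zero argument with looser references to Dixmier and Wilson. Both routes ultimately rest on the same nontrivial fact, but be aware that your passage from ``spectral curve rational'' to ``$1$ lies in the order semigroup'' tacitly uses smoothness of the point at infinity, which is part of the Makar--Limanov/Wilson classification rather than a consequence of rationality alone (a cuspidal rational curve can still have Weierstrass gaps at the marked point). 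So your argument is correct in outline, but the black box you are appealing to is exactly the theorem the paper cites.

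For $n=2$ your treatment is cleaner than the paper's: you observe that the displayed $L$ equals $(\partial+q)^2+c_1(\partial+q)+c_2\in\bbC[\partial+q]$, whence $\partial+q\in\cZ(L)\setminus\bbC[L]$ and the chain $\bbC[\partial+q]\subseteq\cZ(L)\subseteq\cZ(\partial+q)=\bbC[\partial+q]$ finishes at once. The paper instead writes $L=\partial^2+v\partial+u$, computes $[L,\partial+q]$ directly, and solves the resulting first-order ODEs for $v$ and $u$; this recovers the same family but is more computational. One minor remark: your gauge map $\partial\mapsto\partial+q$ is already an automorphism of $\cW$ when $q\in\bbC[x]$, so passing through $\bbC(x)[\partial]$ and intersecting back is unnecessary.
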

\begin{proof}
By Theorem \ref{thm-centralizer}, if $\cZ(L)\neq \bbC[L]$ then $\rank (\cZ(L))=1$. By \cite{MakarLimanov2021} Theorem 1.1, Section 4, Case 1, then $\cZ(L)=\bbC[\partial+q(x)]$, for some $q(x)\in \bbC[x]$. 

Moreover, if $L=\partial^2+v(x)\partial+u(x)$, $u(x),v(x)\in \bbC[x]$ and $A=\partial+q(x)$. By direct computation
$[L,A]=0$ is equivalent to $v'-2q'=0$ and $u''-vq'-q''=0$.
This implies that 
\begin{align*}
    v(x)&=2q(x)+c_1\\
    u(x)&=q(x)c_1 + q(x)^2 + q'(x) + c_2 
\end{align*}
with $c_1,c_2\in \bbC$.
\end{proof}

\begin{example}
    By Corollary \ref{cor-cenWeyl}, the centralizer of $L=\partial^2+x^2$ is $\bbC[L]$, as it is easily verified that it does not commute with an operator of the form $\partial+q(x)$.
\end{example}

Consider operators of order $4$ in the first Weyl algebra, that after a Liouville transformation can be assumed of the form 
\begin{gather}\label{op-weyl}
 L_4= \big( \partial^2 +V(x) \big)^2 +U(x)\partial +W(x),
\end{gather}
with $U(x)$, $V(x)$ and $W(x)$ polynomials in $\bbC [x]$. We proved in \cite{PRZ2019}, Section 6 that given $L_4$ irreducible and such that $\cZ(L_4 )\not= \mathbb{C}[L_4 ]$,
then 
by \cite{PRZ2019}, Lemma 6.4
$$\deg(V )> \max\left\{ \frac{1}{2} \deg(U), \frac{1}{2}\deg(W) \right\},$$
and by \cite{PRZ2019}, Corollary 6.5 
\begin{gather*}\cZ (L_4)={\bbC}[L,B_2],\end{gather*}
for an operator $B_2$ of minimal order $2(2g+1)$, for $g\neq 0$, with spectral curve of genus $g$ defined by $\mu^2=R_{2g+1}(\lambda)$. Furthermore $$\rank(\cZ(L_4))=2.$$

In the first Weyl algebra it is even harder to find  rank~$r$ pairs and
important contributions were made by Grinevich~\cite{Gri}, Mokhov~\cite{Mo2}, Mironov~\cite{Mi1}, Davletshina and Mironov~\cite{DM}, Mironov and Zheglov~\cite{Mi2}, Oganesyan~\cite{Og2016}.

The theory of commuting differential operators in the first Weyl algebra has deep connections with the Jacobian and Dixmier conjectures, see also the connections with Beret's conjecture in \cite{GZ24}. A very recent paper  has announced the Dixmier conjecture to hold in the fist Weyl algebra \cite{Z2024}.

\section{Spectral curves}\label{sec-Scurve}

By a famous theorem of I. Schur in \cite{Sch}, stated for analytic coefficients, given a monic differential operator $L$ in $\Sigma [\partial]$, if $\ord(L)=n$ then the centralizer $\centr((L))$ of $L$ in the ring of pseudo-differential operators is determined by the unique monic $n$-th root $L^{1/n}$ of $L$.
For coefficients in a differential ring $R$ this result was proved in \cite[Theorem~3.1]{Good}
\begin{equation}\label{eq-SchurThm}
    \centr((L))=\left\{ \sum_{-\infty<j\leq m} c_j (L^{1/n})^j\mid c_j\in {\bf C}, m\in\bbZ\right\}.
\end{equation}
As a consequence, $\centr((L))$ is a commutative ring. To review the long history of this theorem, see ~\cite[Sections~3 and~4]{Good}. By  ~\cite[Lemma 1.1]{Good}
$\cZ(L)$ is a domain and by ~\cite[Corollary 4.2]{Good} it is a commutative domain since  
\begin{equation}\label{eq-Schur2}
\centr (L)=\centr ((L))\cap \Sigma [\partial].
\end{equation}
By \eqref{eq-SchurThm} and \eqref{eq-Schur2} the quotient field of $\cZ(L)$ is a function field in one variable,   whose transcendence degree is one.
Moreover any subalgebra $\cA$ of $\cZ (L)$ has Krull dimension one, it is the affine ring of an algebraic curve $\Spec(\cA)$. This fact was first proved by Burchnall and Chaundy \cite{BC1, BC2}, and in greater generality by Krichever \cite{K78}, followed by Mumford \cite{Mum2}, Verdier \cite{Ve}, Mulase \cite{Mulase} and \cite{BZ}, Theorem 1.8 with coefficient in  $\bbC((x))$. In this section, we aim to describe the defining ideal of this abstract curve.

\begin{example}
    Let us consider $\Sigma =\bbC (x)$ and $\partial=\frac{d}{dx}$. It is well known that the centralizer of $L=\partial^2-\frac{2}{x^2}$ is
        \[\centr (L)=\bbC[L,A] \mbox{ with }\ord(A)=3,\]
see for instance \cite{MRZ1}. The centralizer $\centr (L)$ is isomorphic to the coordinate ring of the spectral curve defined by $\mu^2=\lambda^3$, namely
        \[\centr (L)\simeq \frac{\bbC[\lambda,\mu]}{(\mu^2-\lambda^3)}.\]
\end{example}

\subsection{Defining ideals of spectral curves}

Burchnall\textendash Chaundy polynomials were originally defined for pairs of commuting differential operators in \cite{BC1}. 
The Burchnall and Chaundy theorem \cite{BC1}, states that two operators of coprime orders $P$ and $Q$ commute if and only if they satisfy $f(P,Q)=0$ for a constant coefficient polynomial $f(\lambda,\mu)$.

\medskip

We will next define the Burchnall\textendash Chaundy ideal of  a finitely generated commutative $\coC$-algebra $\cA$ in $\Sigma [\partial]$. Without lost of generality we can assume that
\begin{equation}\label{eq-cA}
    \cA=\coC[L, G_1,\ldots ,G_{t-1}]
\end{equation}
with $\ord(L)<\ord(G_i)$ and $G_i\notin \coC[L]$. Observe that $\cA\subseteq \cZ (L)$.

\medskip

Let us consider the polynomial ring 
$$\coC [\lambda,\overline{\mu}]=\coC [\lambda,\mu_1,\ldots , \mu_{t-1}],$$
in algebraic variables $\lambda$ and $\mu_i$, and define the ring homomorphism
\begin{equation}\label{eq-eL}
    \ec: \coC [\lambda,\overline{\mu}] \rightarrow \Sigma[\partial] \mbox{ by } \ec(\lambda)=L,\,\,\,  \ec(\mu_i)=G_i.
\end{equation}
It is a ring homomorphism because the generators $\cG=\{L, G_1,\ldots ,G_{t-1}\}$ commute and they commute with the elements of $\coC$, the field of constants of $\Sigma$.
Given $g\in \coC [\lambda,\overline{\mu}]$ we will denote by $g(L,G_1,\ldots , G_{t-1})$ the image $\ec (g)$ of a polynomial $g\in \coC [\lambda,\overline{\mu}]$.
We define the {\sf BC-ideal of} $\cA$ as 
\begin{equation}\label{def-BCL}
\BC (\cA):=\Ker(\ec)=\{g\in\coC [\lambda,\overline{\mu}]\mid g(L,G_1, \ldots ,G_{t-1})=0\}.    
\end{equation}
We will call the elements of the BC ideal  {\sf BC-polynomials}.
The next isomorphism holds
\[\cA=\coC[L,G_1,\ldots ,G_{t-1}]\simeq\frac{\coC [\lambda,\overline{\mu}]}{\BC(\cA)}.\]
Since we are working with differential operators in $\Sigma[\partial]$, then there are no zero divisors in $\cA$. Thus $\cA$ is an integral domain and $\BC(\cA)$ is a prime ideal. Moreover $\cA$ is the coordinate ring of an algebraic curve whose defining ideal is $\BC (\cA)$.

The {\sf spectral curve of 
$\cA$} is the irreducible algebraic variety of the prime ideal $\BC (\cA)$, that is 
\[\Gamma_{\cA}:=\{\eta\in\coC^t\mid g(\eta)=0, \forall g\in\BC(\cA)\}.\]
Let us denote by $\coC[\Gamma_{\cA}]$ the coordinate ring of the curve
\[\coC[\Gamma_{\cA}]:=\frac{\coC [\lambda,\overline{\mu}]}{\BC(\cA)}.\]
We define {\sf the field of the curve} to be $\coC(\Gamma_{\cA})$ the fraction field of $\coC[\Gamma_{\cA}]$.

\subsection{Planar spectral curves}

Consider the case $\cA=\coC[P,Q]$, for commuting differential operators $P$ and $Q$ in $\Sigma[\partial]$, to avoid meaningless situations we will assume that $P,Q\notin \coC[\partial]$ and both have positive order. In this case we will denote the BC ideal of $\cA$ by $\BC (P,Q)$ and the spectral curve $\Gamma_{\cA}$ by $\Gamma_{P,Q}$, and say it is the {\sf spectral curve of the pair $P, Q$}.

\medskip

For an algebraically closed field of constants $\coC$, the defining ideal $\BC(P,Q)$ of $\Gamma_{P,Q}$ is generated by an irreducible polynomial $f\in \coC[\lambda,\mu]$,
\[\BC(P,Q)=(f).\]
The irreducible algebraic variety of the prime ideal $(f)$ is 
\begin{equation}\label{eq-GammaPQ}
\Gamma_{P,Q}:=\left\{ \ P\in\coC^2\mid f(P)=0\right\},  
\end{equation}
which does not reduce to a  a single point in $\coC^2$ because $P,Q\notin \coC$.
By $\ec$ the isomorphism of commutative domains is established
\[\coC [P,Q]\simeq \frac{\coC[\lambda,\mu]}{\BC(P,Q)}=\frac{\coC[\lambda,\mu]}{(f)}.\]

The computation of the defining polynomial $f$ of the spectral curve $\Gamma_{P,Q}$ can be achieved by means of the differential resultant.
Let us assume that $\ord(P)=n$ and $\ord(Q)=m$, the {\sf differential resultant} of $P-\lambda$ and $Q-\mu$ equals
$$h(\lambda,\mu)=\dres(P-\lambda,Q-\mu):=\det (S_0(P-\lambda,Q-\mu)),$$
where the Sylvester matrix $S_0(P-\lambda,Q-\mu)$ is the coefficient matrix of the extended system of differential operators
\[\Xi_0=\{\partial^{m-1}(P-\lambda),\ldots \partial(P-\lambda), P-\lambda, \partial^{n-1}(Q-\mu), \ldots ,\partial (Q-\mu), Q-\mu\}.\]
Observe that $S_0(P-\lambda,Q-\mu)$ is a squared matrix of size $n+m$ and entries in $\Sigma[\lambda,\mu]$. For the definition and main properties of differential resultants for ODOs see \cite{Cha}, \cite{McW} or \cite{RZ2024}.

It was first proved by G. Wilson in \cite{Wilson}  that $h(\lambda,\mu)$ is a constant coefficient polynomial; see also \cite{Prev}.  
For differential operators with coefficients in an arbitrary differential field, these results were proved in \cite{Zheglov}, Section 5.3, see also \cite{RZ2024}, Theorem 5.4 and \cite{MRZ1}. 

\begin{thm}\label{thm-hPQ} Given commuting differential operators $P$ and $Q$ in {$\Sigma [\partial]\backslash \coC[\partial]$, both of positive order}, then
    \[h(\lambda,\mu)=\dres(P-\lambda,Q-\mu)\in \BC(P,Q).\]
{Moreover  $h= f^ {r}$ for a non zero natural number $r$ and $f$ being the irreducible polynomial such that $\BC(P,Q)=(f)$.}
\end{thm}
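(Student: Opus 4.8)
The plan is to identify the zero locus $\mathbf{V}(h)\subseteq\coC^2$ of the constant-coefficient polynomial $h$ (its constant coefficients being already guaranteed by \cite{Wilson}) with the spectral curve $\Gamma_{P,Q}=\mathbf{V}(f)$, and then to promote that equality of varieties to the divisibility $h=f^{r}$ by means of the Nullstellensatz, which is available precisely because $\coC$ is algebraically closed. Throughout write $n=\ord(P)$, $m=\ord(Q)$.

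Let $Z\subseteq\coC^2$ denote the set of pairs $(\lambda_0,\mu_0)$ for which $P-\lambda_0$ and $Q-\mu_0$ admit a nonzero common solution $\psi$ in a Picard--Vessiot extension $\cE$ of $\Sigma$. First I would show $\mathbf{V}(h)=Z$. If $(\lambda_0,\mu_0)\in Z$, the jet vector $v=(\psi,\partial\psi,\ldots,\partial^{\,n+m-1}\psi)^{\top}$ lies in the kernel of the specialized Sylvester matrix $S_0(P-\lambda_0,Q-\mu_0)$, since every row $\partial^{k}(P-\lambda_0)$ and $\partial^{k}(Q-\mu_0)$ annihilates $\psi$; as $v\neq0$, the matrix is singular and $h(\lambda_0,\mu_0)=\det S_0(P-\lambda_0,Q-\mu_0)=0$, giving $Z\subseteq\mathbf{V}(h)$. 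For the reverse inclusion I would invoke the common-factor property of the differential resultant (\cite{Cha,RZ2024}): $h(\lambda_0,\mu_0)=0$ forces $\gcrd(P-\lambda_0,Q-\mu_0)$ to be nontrivial, and its solution space supplies the required common $\psi$.

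The heart of the argument is the inclusion $Z\subseteq\Gamma_{P,Q}$. Given $(\lambda_0,\mu_0)\in Z$ with common eigenfunction $\psi$ and any $g\in\BC(P,Q)$, I would evaluate $g(P,Q)\psi=g(\lambda_0,\mu_0)\psi$: since $g$ has constant coefficients and $P,Q$ commute, each monomial $c_{ij}P^{i}Q^{j}$ acts on $\psi$ as the scalar $c_{ij}\lambda_0^{\,i}\mu_0^{\,j}$. As $g(P,Q)=0$ and $\psi\neq0$, we obtain $g(\lambda_0,\mu_0)=0$ for every BC-polynomial, i.e. $(\lambda_0,\mu_0)\in\mathbf{V}(\BC(P,Q))=\Gamma_{P,Q}$. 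Moreover $Z\neq\emptyset$: for each $\lambda_0\in\coC$ the operator $Q$ restricts to a $\coC$-linear endomorphism of the $n$-dimensional solution space of $P-\lambda_0$, which has an eigenvalue $\mu_0\in\coC$ because $\coC$ is algebraically closed.

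Combining, $\emptyset\neq\mathbf{V}(h)=Z\subseteq\mathbf{V}(f)\subsetneq\coC^2$, so $h$ is neither zero nor a nonzero constant. From $\mathbf{V}(h)\subseteq\mathbf{V}(f)$ and the Nullstellensatz, $(f)=\mathbf{I}(\mathbf{V}(f))\subseteq\mathbf{I}(\mathbf{V}(h))=\sqrt{(h)}$, so $f^{k}\in(h)$ for some $k$, that is $h\mid f^{k}$. Since $\coC[\lambda,\mu]$ is a UFD and $f$ is irreducible, $h=c\,f^{r}$ with $c\in\coC^{\ast}$ and $r\geq1$ (the exponent is positive as $h$ is nonconstant); rescaling $f$ by $c^{1/r}$ normalizes $c=1$, giving $h=f^{r}$ and in particular $h\in(f)=\BC(P,Q)$. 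The main obstacle is the converse half of the resultant--common-factor correspondence, namely that a vanishing differential resultant yields a genuine common right divisor with no extraneous components; this is what rules out spurious factors of $h$, and I would cite \cite{Cha,RZ2024} rather than reprove it. The only other delicate point is the non-commutative evaluation $g(P,Q)\psi=g(\lambda_0,\mu_0)\psi$, valid exactly because BC-polynomials have constant coefficients and $P,Q$ commute.
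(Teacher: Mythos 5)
Your argument is correct, but note that the paper itself offers no proof of Theorem \ref{thm-hPQ}: it is stated as a review of known results, with the surrounding text delegating the proof to Wilson, Previato, Zheglov (Section 5.3) and \cite{RZ2024}, Theorem 5.4. What you have written is essentially a faithful reconstruction of the classical Previato--Wilson argument from those sources: identify the zero set of $h$ with the locus of common eigenfunctions (easy direction via the kernel of the Sylvester matrix, converse via the resultant--gcrd correspondence), show every such point lies on $\Gamma_{P,Q}$ by evaluating BC-polynomials on a common eigenfunction, produce points via the eigenvalue argument on the $n$-dimensional solution space, and finish with the Nullstellensatz. The two external inputs you lean on --- the constancy of the coefficients of $h$ and the fact that a vanishing differential resultant forces a nontrivial $\gcrd$ --- are exactly the ones the paper itself imports (the latter as \cite[Theorem A5]{RZ2024}), so flagging rather than reproving them is appropriate. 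One small point worth making explicit: your Nullstellensatz step needs $\BC(P,Q)\neq(0)$, i.e.\ the existence of the irreducible generator $f$, which is not a consequence of your argument but is supplied by the Krull-dimension/transcendence-degree discussion preceding the theorem (and is presupposed by its statement); and the normalization from $h=c\,f^{r}$ to $h=f^{r}$ is only a choice of generator of $(f)$, as you say.
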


\subsection{Space spectral curves}\label{sec-SpaceSC}

Consider $\cA=\coC[L,G_1,G_2]$, with $\ord(L)=n$, $\ord(G_i)=o_i$ and denote
\begin{equation*}
h_i(\lambda,\mu_i):=\dres(L-\lambda,G_i-\mu_i)=\mu_i^n-\lambda^{o_i}+\cdots \in\coC [\lambda ,\mu_i],
\end{equation*}
with square free part $f_i$, the radical of $h_i$.
Recall that $f_i$ are irreducible and $\BC(L,G_i)=(f_i)$, $i=1,2$.
In addition let as denote by $f_3$ the irreducible polynomial in $\coC[\mu_1,\mu_2]$ obtained as the radical of 
\begin{equation*}
    \dres(G_1-\mu_1,G_2-\mu_2)=\mu_2^{o_1}-\mu_1^{o_2}+\cdots.
\end{equation*}
We have $\BC(G_1,G_2)=(f_3)$. In the next chain of ideals in $\coC[\lambda,\mu_1,\mu_2]$
\begin{equation}\label{eq-inclusions}
    (0)\subset  (f_i,f_j)\subseteq (f_1,f_2,f_3) \subseteq \BC(\cA),\,\,\, i,j\in \{1,2,3\}
\end{equation}
it is interesting to determine if the  inclusions are identities.

Observe that $\beta = V(f_1 ) \cap V (f_2 )$ is a space algebraic curve. Including $f_3$ in the ideal allows to select one irreducible component.
There is a chain of algebraic varieties 
\begin{equation*}
 \Gamma_{\cA}:=V(\BC(\cA)) \subseteq   V(f_1,f_2,f_3) \subseteq
 V(f_i,f_j),
\end{equation*}
and the question is to find the conditions for equality to hold.

\begin{thm}[\cite{RZ2024}, Theorem 10]\label{thm-BCL} 
Let $\cA=\cZ(L)=\coC[L,A_1,A_2]$ be  the centralizer of an order $3$ operator in $\Sigma [\partial]\backslash \coC[\partial]$.
Given the irreducible polynomials $f_i$, $i=1,2,3$  such that $\BC(L,A_i )=(f_i)$ and $\BC(A_1,A_2)=(f_3)$, then
\begin{equation}
        \BC(\cA)=(f_1,f_2,f_3).
\end{equation}
\end{thm}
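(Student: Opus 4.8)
The plan is to establish the one nontrivial inclusion $\BC(\cA)\subseteq (f_1,f_2,f_3)$, the reverse inclusion $(f_1,f_2,f_3)\subseteq\BC(\cA)$ being already recorded in \eqref{eq-inclusions}. The overarching strategy is to prove that the quotient $B:=\coC[\lambda,\mu_1,\mu_2]/(f_1,f_2,f_3)$ is spanned as a $\coC[\lambda]$-module by the classes of $\{1,\mu_1,\mu_2\}$. Indeed, the inclusion $(f_1,f_2,f_3)\subseteq\BC(\cA)$ gives a surjection $\phi:B\twoheadrightarrow\cA$ induced by $\ec$, sending the classes of $1,\mu_1,\mu_2$ to $1,A_1,A_2$. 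Since $L$ is transcendental over $\coC$ we have $\coC[\lambda]\cong\coC[L]$, and by Theorem \ref{thm-centralizer} the images $1,A_1,A_2$ form a free $\coC[L]$-basis of $\cA$; hence the three classes are automatically $\coC[\lambda]$-independent in $B$. If they also span $B$, they are a basis, $B$ is free of rank $3$, and $\phi$ carries a basis to a basis, so it is an isomorphism and $(f_1,f_2,f_3)=\ker(\ec)=\BC(\cA)$. Thus everything reduces to a spanning statement; note this argument needs no separate torsion analysis, because independence is inherited from the images.

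Set $o_i=\ord(A_i)$, so $o_1\equiv 1$ and $o_2\equiv 2\pmod 3$. First I record the reduction supplied by $f_1,f_2$. Because $\ord(L)=3$, the resultant $h_i=\dres(L-\lambda,A_i-\mu_i)$ is monic of degree $3$ in $\mu_i$; by Theorem \ref{thm-hPQ} one has $h_i=f_i^{r_i}$, and since $3$ is prime while $A_i\notin\coC[L]$ forces $\deg_{\mu_i}f_i>1$, we get $r_i=1$ and $\deg_{\mu_i}f_i=3$ with leading coefficient $1$. Consequently $\coC[\lambda,\mu_1,\mu_2]/(f_1,f_2)$ is free over $\coC[\lambda]$ on the nine monomials $\mu_1^a\mu_2^b$ with $0\le a,b\le 2$, so spanning of $B$ by $\{1,\mu_1,\mu_2\}$ amounts to reducing the six monomials $\mu_1^2,\mu_2^2,\mu_1\mu_2,\mu_1^2\mu_2,\mu_1\mu_2^2,\mu_1^2\mu_2^2$ into the $\coC[\lambda]$-span of $\{1,\mu_1,\mu_2\}$ using $f_3$.

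The module structure of the centralizer predicts these reductions. From $\cZ(L)=\coC[L]\oplus\coC[L]A_1\oplus\coC[L]A_2$ (Theorems \ref{thm-rank} and \ref{thm-centralizer}) and a comparison of order classes modulo $3$, the products $A_1A_2$, $A_2^2$, $A_1^2$ have orders $\equiv 0,1,2$ respectively, so each has a unique expansion in the basis whose leading term lies in the corresponding summand. This yields, modulo $\BC(\cA)$,
\[
\mu_1^2\equiv a(\lambda,\mu_1,\mu_2),\quad \mu_2^2\equiv b(\lambda,\mu_1,\mu_2),\quad \mu_1\mu_2\equiv c(\lambda,\mu_1,\mu_2),
\]
with $a,b,c$ being $\coC[\lambda]$-linear forms in $1,\mu_1,\mu_2$; write $w_2,w_1,w_0\in\BC(\cA)$ for the three defining elements. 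These relations already reduce every higher monomial into the span of $\{1,\mu_1,\mu_2\}$, so by the argument of the first paragraph $\BC(\cA)=(w_0,w_1,w_2)$. Since each $f_j\in\BC(\cA)=(w_0,w_1,w_2)$ is automatic, the theorem is therefore \emph{equivalent} to the containments $w_0,w_1,w_2\in(f_1,f_2,f_3)$; that is, to showing that $f_3$, reduced modulo $(f_1,f_2)$, performs the three quadratic reductions with coefficients in $\coC[\lambda]$.

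This last point is the crux and the principal obstacle. To attack it I would first pass to the generic fibre and show $\dim_{\coC(\lambda)}\big(\coC(\lambda)[\mu_1,\mu_2]/(f_1,f_2,f_3)\big)=3$. Over $\coC(\lambda)$ the pair $(f_1,f_2)$ cuts out the nine points $(\mu_1^{(i)},\mu_2^{(j)})$ built from the three roots of $f_1(\lambda,\cdot)$ and of $f_2(\lambda,\cdot)$; as $\coC(\Gamma_{\cA})=\operatorname{Frac}(\cA)$ has degree $\ord(L)=3$ over $\coC(\lambda)$, the three ``matching'' points arising from the three points of $\Gamma_{\cA}$ above $\lambda$ all satisfy $f_3=0$ because $f_3(A_1,A_2)=0$. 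The genuinely hard step is to prove that $f_3$ creates no spurious coincidence, i.e. $f_3(\mu_1^{(i)},\mu_2^{(j)})\neq 0$ for $i\neq j$: this is exactly the algebraic meaning of the statement that adjoining $f_3$ selects a single irreducible component of $V(f_1)\cap V(f_2)$, and it is where the irreducibility of $f_3$ together with the coprimality $\gcd(3,o_i)=1$ must be used. Granting the count, the reduction of $\mu_1^2,\mu_2^2,\mu_1\mu_2$ holds over $\coC(\lambda)$; the final task is to upgrade it to an identity with $\coC[\lambda]$-coefficients, which I would handle by showing that $(f_1,f_2,f_3)$ is unmixed of the expected dimension with no components supported over special values of $\lambda$, so that $B$ has no $\coC[\lambda]$-torsion and the generic reduction descends integrally. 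I expect the ``no spurious intersection'' step to be the true bottleneck.
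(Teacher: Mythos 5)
The paper does not actually prove this theorem; it quotes it from \cite{RZ2024} (Theorem 10), so there is no in‑paper argument to compare against. Judged on its own terms, your write‑up is a correct and well‑organized \emph{reduction} of the theorem, but not a proof of it. The framework is sound: the inclusion $(f_1,f_2,f_3)\subseteq\BC(\cA)$ is free, the surjection $B=\coC[\lambda,\mu_1,\mu_2]/(f_1,f_2,f_3)\twoheadrightarrow\cA$ together with the freeness of the Goodearl basis $\{1,A_1,A_2\}$ (Theorem \ref{thm-centralizer}) correctly converts the statement into the assertion that $\{1,\mu_1,\mu_2\}$ spans $B$ over $\coC[\lambda]$; the observation that $r_i=1$ and $\deg_{\mu_i}f_i=3$ because $3$ is prime is right; and the reformulation ``the theorem holds iff the structure relations $w_0,w_1,w_2$ for $A_1A_2,\,A_2^2,\,A_1^2$ lie in $(f_1,f_2,f_3)$'' is a genuinely useful way to isolate the content (as a by‑product you correctly get $\BC(\cA)=(w_0,w_1,w_2)$).

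The gap is that the isolated content is exactly what you do not prove. You defer two steps. First, the ``no spurious intersection'' claim — that $f_3$ vanishes at none of the six non‑matching points of $V(f_1)\cap V(f_2)$ over $\overline{\coC(\lambda)}$ — which you yourself label the bottleneck; this is equivalent to $V(f_1,f_2,f_3)=\Gamma_{\cA}$ set‑theoretically and is the actual theorem in disguise, so ``granting the count'' grants the result. Second, the descent from $\coC(\lambda)$ to $\coC[\lambda]$ is not a routine afterthought: $(f_1,f_2,f_3)$ is three equations cutting out a codimension‑two set, hence \emph{not} a complete intersection, so unmixedness is not automatic and embedded components of $B$ over the finitely many $\lambda$ where distinct components of $V(f_1,f_2)$ meet are a real possibility. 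Any such $\coC[\lambda]$-torsion in $B$ would be killed by the map to the torsion‑free module $\cA$, i.e.\ it would make the theorem \emph{false}; so torsion‑freeness must be proved, not assumed. (The reducedness of the generic fibre, by contrast, is fine: $\coC(\lambda)[\mu_1,\mu_2]/(f_1,f_2)$ is \'etale since $f_1,f_2$ are separable in $\mu_1,\mu_2$ in characteristic zero.) As it stands the proposal maps the terrain accurately but leaves both summits unclimbed.
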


\begin{rem}
    The previous Theorem \ref{thm-BCL} can be extended to $\cA$ determined by $3$ generators, assuming that $\rank (\cA)=1$. For instance, the case $\cA=\cZ(L)$, where $L$ has prime order and $\cZ(L)$ has only three generators, see Example \ref{ex-o5}

    {\bf Open problem.} It is an open problem to compute the defining ideal of the spectral curve $\Gamma_{\cA}$ for a commutative algebra $\cA$ in $\Sigma [\partial]$, with more than $2$ generators and of arbitrary rank. 
\end{rem}

\begin{example}\label{ex-o5}
    Consider the differential field $\Sigma=\coC (x)$ with derivation $\partial=d/dx$ and the following order $5$ differential operator in $\Sigma [\partial]$
    \[L= \partial^5 - \frac{55}{x^2}\partial^3 + \frac{85}{x^3}\partial^2 + \frac{235}{x^4}\partial - \frac{640}{x^5}.\]
    In this case, since $n=5$, Goodearl's basis of $\cZ(L)$ as a $\coC[L]$-module is of the form $\{1, A_1, A_2, A_3, A_4\}$. With the algorithm in \cite{JR2025} the basis was computed checking that $\ord(A_i)\equiv_5 i$ where $\ord(A_1)=6$, $\ord(A_3)=8$, $A_2 = A_1^2$ and  $A_4 = A_1A_3$. Thus
\[\cA=\cZ(L)=\coC[L,G_1=A_1,G_2=A_3]\]
and $\BC(\cZ[L])=(f_1,f_2,f_3)$ with $f_1=\lambda^6-\mu_1^5$, $f_2=\lambda^8-\mu_2^5$, $f_3=\mu_2^3-\mu_1^4$.
\end{example}

\section{Factorization over spectral curves}\label{sec-Ssheaves}

Given a  commutative subalgebra $\cA$ of the ring of differential operators $\Sigma [\partial]$, we review next how to define  a vector bundle $\cF_{\cA}$ on the algebraic curve $\Gamma_{\cA}$. From the extense existing literature, our point of view is closer to the results of Previato and Wilson \cite{Wilson,PW}.

In order to achieve an effective computation of $\cF_{\cA}$, we look at this problem as a factorization problem of $L-\lambda$ over a new coefficient field $\Sigma(\Gamma_{\cA})$, that contains the coefficient field $\Sigma$ of $L$ and the field of the curve $\coC (\Gamma_{\cA})$. Let us consider the ring 
\begin{equation}
\Sigma[\Gamma_{\cA}]:=\frac{\Sigma[\lambda,\overline{\mu}]}{[\BC(\cA)]}.
\end{equation}
{\bf Conjecture.} $\Sigma[\Gamma_{\cA}]$   
is a differential domain, whose ring of constants is $$\coC[\Gamma_{\cA}]\simeq \cA.$$ 
In this case, the new coefficient field $\Sigma(\Gamma_{\cA})$ for factorization of $L-\lambda$ is the fraction field of $\Sigma [\Gamma_{\cA}]$.

This conjecture was proved for $\cA=\cZ(L)$, for a Schr\" odinger operator $L$, in \cite{MRZ2}. We summarize in the following the achievements for planar and space spectral curves in \cite{RZ2024}. It would be interesting to clarify the relation between the so called spectral sheaf on the points of the spectral curve \cite{BZ} and the vector bundles that we will compute by means of greatest common right divisors in $\Sigma(\Gamma_{\cA})[\partial]$.

\subsection{Factorization over planar curves}

Given commuting differential operators $P$, $Q$ in~$\Sigma [\partial]$ and the irreducible polynomial $f\in \coC[\lambda,\mu]$ that generates the BC ideal $\BC(P,Q)$, then the differential ideal $[f]$ generated by $f$ in $\Sigma[\lambda,\mu]$ is a prime differential ideal \cite[Theorem 7]{RZ2024}, and we can consider the differential domain
\[\Sigma [\Gamma_{P,Q}]=\frac{\Sigma[\lambda,\mu]}{[f]}.\]
The derivation $\partial$ in $\Sigma$, is extended to $\Sigma [\lambda,\mu]$, setting $\partial (\lambda)=\partial(\mu)=0$.
A derivation $\tilde{\partial}$ is naturally defined on $\Sigma [\Gamma_{P,Q}]$ by $\tilde{\partial}(q+[f])=\partial(q)+[f]$, for $q\in \Sigma[\lambda,\mu]$, we denote $\tilde{\partial}$ by $\partial$ if there is no room for confusion.
We denote by $\Sigma (\Gamma_{P,Q})$ the fraction field of $\Sigma [\Gamma_{P,Q}]$, which is a differential field with the extended derivation. 

\medskip

The differential resultant of $P-\lambda$ and $Q-\mu$ is zero in $\Sigma (\Gamma_{P,Q})$, by the differential resultant theorem, see for instance \cite[Theorem A5]{RZ2024}, then  $P-\lambda$ and $Q-\mu$ have a nontrivial greatest common right divisor, as differential operators with coefficients in $\Sigma (\Gamma_{P,Q})$
\begin{equation}
    \cF(\lambda,\mu)=\gcrd(P-\lambda,Q-\mu),
\end{equation}
which is a global factor in the following sense. For every $(\lambda_0,\mu_0)\in \Gamma_{P,Q}\backslash Z$, where $Z$ is a finite subset of $\coC^2$, we obtain a right factor in $\Sigma[\partial]$ of 
 \[
L-\lambda_0 = N \cdot \cF(\lambda_0,\mu_0),
\]
where $\cF(\lambda_0,\mu_0)$ is the greatest common right divisor of $L-\lambda_0$ and $P-\mu_0$ in $\Sigma[\partial]$, the result of replacing $(\lambda, \mu)$ by $(\lambda_0,\mu_0)$. The set of points $Z$ to be removed is the finite set of points where the coefficients of $\cF$ are not well defined, see \eqref{eq-Z} for details. Observe that 
\[\ord(\cF(\lambda_0,\mu_0))\leq \ord(\cF(\lambda,\mu)), (\lambda_0,\mu_0)\in \Gamma_{P,Q}\backslash Z.\]

The next result, proved for ODOs with coefficients in $\Sigma=\coC((x))$, implies that in fact equality holds, that the dimension of the space of common solutions at each (non-singular) point of $\Gamma_{P,Q}$ is the same. A proof of Theorem \ref{thm-wilson} for an arbitrary differential field $\Sigma$ is an open question.


\begin{thm} [ G. Wilson\cite{Wilson}] \label{thm-wilson}
    Let $\Sigma=\bbC((x))$ and consider  commuting differential operators $P$, $Q$ in~$\Sigma [\partial]$. Then the differential resultant  
    \[\dres (P-\lambda,Q-\mu)=f(\lambda,\mu)^r,\] 
    for an irreducible polynomial $f$ such that $\BC(P,Q)=(f)$ and a positive integer
    \[r=\rank(\coC[P,Q])=\gcd\{\ord(T)\,|\, T\in \coC[P,Q]\}.\]
    Moreover $r=\dim (V(\lambda_0,\mu_0))$, where $V(\lambda_0,\mu_0)$ is the space of common solutions of $Py=\lambda_0 y$ and $Qy=\mu_0 y$, for any non-singular $(\lambda_0,\mu_0)$ in $\Gamma_{P,Q}$.
\end{thm}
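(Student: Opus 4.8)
The plan is to take Theorem~\ref{thm-hPQ} as the starting point, which already gives $\dres(P-\lambda,Q-\mu)=f^{r'}$ for the irreducible generator $f$ of $\BC(P,Q)$ and some $r'\in\bbN$, and then to identify $r'$ first with $\rank(\coC[P,Q])$ and afterwards with $\dim V(\lambda_0,\mu_0)$. The hypothesis $\Sigma=\bbC((x))$ will only enter at the very end.

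First I would pin down $r'$ by a degree count combined with the module structure of Section~\ref{sec-basis}. Writing $n=\ord(P)$ and $m=\ord(Q)$, the Sylvester matrix $S_0(P-\lambda,Q-\mu)$ shows that $\lambda$ occurs only in the $m$ rows coming from $P-\lambda$ and $\mu$ only in the $n$ rows coming from $Q-\mu$, so that $\deg_\mu h=n$ and $\deg_\lambda h=m$; with $h=f^{r'}$ this forces $\deg_\mu f=n/r'$. On the other hand, since $f$ is irreducible with $\BC(P,Q)=(f)$, the coordinate ring $\coC[P,Q]\simeq \coC[\lambda,\mu]/(f)$ is a finitely generated torsion-free, hence free, module over the principal ideal domain $\coC[\lambda]\simeq\coC[P]$, of rank $[\coC(\Gamma_{P,Q}):\coC(\lambda)]=\deg_\mu f$. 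Applying the argument proving Theorem~\ref{thm-rank} inside the subalgebra $\coC[P,Q]$ (the residues $\{\ord(T)\bmod n\mid 0\neq T\in\coC[P,Q]\}$ form a subgroup of $\bbZ/n\bbZ$, namely $\rank(\coC[P,Q])\,\bbZ/n\bbZ$, and division by $P$ lowers orders in steps of $n$) identifies this module rank with $n/\rank(\coC[P,Q])$. Comparing the two expressions for $\deg_\mu f$ yields $r'=\rank(\coC[P,Q])=:r$, which is the first assertion.

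For the spectral interpretation I would rewrite $V(\lambda_0,\mu_0)$ as the solution space of $\gcrd(P-\lambda_0,Q-\mu_0)$, so that $\dim V(\lambda_0,\mu_0)=\ord\bigl(\gcrd(P-\lambda_0,Q-\mu_0)\bigr)$. Fixing the $n$-dimensional solution space $S$ of $P-\lambda_0$ (over a Picard--Vessiot extension), the relation $[P,Q]=0$ makes $Q$ act as an endomorphism $Q|_S$, and $V(\lambda_0,\mu_0)$ is precisely its $\mu_0$-eigenspace; the spectrum of $Q|_S$ is the fibre of the projection $\Gamma_{P,Q}\to\mathbb{A}^1$, $(\lambda,\mu)\mapsto\lambda$. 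Using the classical identity that the characteristic polynomial of $Q|_S$ equals $h(\lambda_0,\mu)=f(\lambda_0,\mu)^{r}$ up to a nonzero constant, at a generic (unramified) $\lambda_0$ the polynomial $f(\lambda_0,\mu)$ has $n/r$ distinct roots, each an eigenvalue of $Q|_S$ of algebraic multiplicity exactly $r$. This gives the upper bound $\dim V(\lambda_0,\mu_0)\le r$ (geometric $\le$ algebraic multiplicity), consistent with the specialization bound $\ord(\cF(\lambda_0,\mu_0))\le\ord(\cF(\lambda,\mu))=r$ already recorded for the global divisor $\cF=\gcrd(P-\lambda,Q-\mu)$ over $\Sigma(\Gamma_{P,Q})$, and it shows the generic value is $r$.

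The \textbf{main obstacle} is the reverse inequality $\dim V(\lambda_0,\mu_0)\ge r$ at \emph{every} non-singular point, i.e. proving that the generic value $r$ is never dropped at a special smooth point and that ramified smooth points do not carry a larger eigenspace. This is exactly where the hypothesis $\Sigma=\bbC((x))$ is indispensable, and why the statement over an arbitrary differential field is open. Here I would exploit the formal structure: Schur's description \eqref{eq-SchurThm}--\eqref{eq-Schur2} of the centralizer through the $n$-th root $P^{1/n}$ supplies formal common eigenfunctions, and at a smooth point of $\Gamma_{P,Q}$, where the local ring is regular of dimension one with a uniformizing parameter, one organizes these eigenfunctions into the fibre of Wilson's rank-$r$ eigenfunction bundle and extracts $r$ linearly independent common formal solutions. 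This yields $\dim V(\lambda_0,\mu_0)\ge r$, and combined with the upper bound of the previous paragraph gives $\dim V(\lambda_0,\mu_0)=r$ uniformly over the whole non-singular locus of $\Gamma_{P,Q}$.
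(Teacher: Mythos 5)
The paper offers no proof of this statement: its ``proof'' is the single line ``For a rigorous proof see \cite[Appendix]{Wilson}'', so the theorem is imported as an external result. Your first paragraph is therefore already more than the paper gives, and it is essentially sound: the degree count $\deg_\mu h=\ord(P)=n$ from the Sylvester matrix, the identification of the $\coC[\lambda]$-module rank of $\coC[\lambda,\mu]/(f)$ with $\deg_\mu f$ (using that $f$ is monic in $\mu$ up to a constant because $f^{r'}=h$ is), and the Goodearl-type computation of the $\coC[P]$-module rank of $\coC[P,Q]$ as $n/\gcd\{\ord(T)\}$ together give $r'=\rank(\coC[P,Q])$. You should make explicit that elements of $\cZ(P)$ have constant leading coefficients when $P$ is monic, since that is what makes the division step of the Theorem~\ref{thm-rank} argument run inside the subalgebra $\coC[P,Q]$.

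The genuine gap is in your last two paragraphs, i.e.\ in the part that is the actual content of Wilson's appendix: that at \emph{every} non-singular point of $\Gamma_{P,Q}$ the space of common solutions has dimension exactly $r$. Your upper bound via ``geometric $\le$ algebraic multiplicity'' of $Q|_S$ only works at points lying over an unramified value $\lambda_0$; at a smooth point of the curve sitting over a ramified $\lambda_0$, the algebraic multiplicity of $\mu_0$ as a root of $h(\lambda_0,\mu)=f(\lambda_0,\mu)^r$ strictly exceeds $r$, so even the inequality $\dim V(\lambda_0,\mu_0)\le r$ is not established there. For the lower bound you propose to ``organize these eigenfunctions into the fibre of Wilson's rank-$r$ eigenfunction bundle and extract $r$ linearly independent common formal solutions'' --- but the existence of a common-eigenfunction bundle of rank exactly $r$ over the smooth locus \emph{is} the assertion $\dim V(\lambda_0,\mu_0)=r$; invoking it is circular. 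A non-circular argument must actually construct the $r$ independent common solutions at each smooth point, e.g.\ from the formal Baker-type eigenfunctions attached to $P^{1/n}\in\Sigma((\partial^{-1}))$ together with a local analysis on (the normalization of) $\Gamma_{P,Q}$; this is precisely where the hypothesis $\Sigma=\bbC((x))$ enters and precisely the step your sketch names but does not carry out.
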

\begin{proof}
For a rigorous proof see  \cite[Appendix]{Wilson}    
\end{proof}

\begin{cor}\label{thm-gcd}
Let $\Sigma=\bbC((x))$. Given commuting differential operators $P$, $Q$ in~$\Sigma [\partial]$, let us 
assume that $\rank (\coC[P,Q])=r$. Then the greatest common right divisor of $P-\lambda$ and $Q-\mu$ as differential operators in $\Sigma (\Gamma_{P,Q})[\partial]$ has order $r$.
\end{cor}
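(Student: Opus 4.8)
The plan is to compute the order $d := \ord(\cF(\lambda,\mu))$ of the generic right divisor $\cF(\lambda,\mu)=\gcrd(P-\lambda,Q-\mu)$ over $\Sigma(\Gamma_{P,Q})$ by specializing it to a sufficiently generic point of $\Gamma_{P,Q}$ and then invoking Theorem \ref{thm-wilson}. The delicate point is that, although for an arbitrary point $(\lambda_0,\mu_0)\in\Gamma_{P,Q}\setminus Z$ one only knows $\ord(\cF(\lambda_0,\mu_0))\le d$, at a \emph{generic} point the order is in fact preserved, and there the specialization coincides with the honest gcrd in $\Sigma[\partial]$ whose order is pinned down by Wilson's count.

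First I would isolate the leading coefficient $c_d$ of $\cF(\lambda,\mu)$, i.e.\ the coefficient of $\partial^d$. By definition of the order, $c_d$ is a nonzero element of the field $\Sigma(\Gamma_{P,Q})$, so its numerator is a function on $\Gamma_{P,Q}$ vanishing only on a finite set, while the points where its denominator vanishes are already absorbed into $Z$. Since $\Gamma_{P,Q}$ is an irreducible affine curve, hence infinite, and since its singular locus, the set $Z$, and the zero locus of $c_d$ are each finite, I can choose a point $(\lambda_0,\mu_0)\in\Gamma_{P,Q}$ that is non-singular, lies outside $Z$, and satisfies $c_d(\lambda_0,\mu_0)\ne 0$. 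At such a point the specialization $\cF(\lambda_0,\mu_0)$ retains leading coefficient $c_d(\lambda_0,\mu_0)\neq 0$, so its order is exactly $d$; moreover, by the specialization property recalled before Theorem \ref{thm-wilson}, $\cF(\lambda_0,\mu_0)=\gcrd(P-\lambda_0,Q-\mu_0)$ in $\Sigma[\partial]$.

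It remains to identify this order with $r$. Writing $F:=\gcrd(P-\lambda_0,Q-\mu_0)$, a B\'ezout identity $F = U(P-\lambda_0)+V(Q-\mu_0)$, available because $\Sigma[\partial]$ admits Euclidean division and $\gcrd$ generates $\Sigma[\partial](P-\lambda_0)+\Sigma[\partial](Q-\mu_0)$, shows that $\Ker F$ coincides with the common solution space $V(\lambda_0,\mu_0)$ of $Py=\lambda_0 y$ and $Qy=\mu_0 y$: the inclusion $\Ker F\subseteq V(\lambda_0,\mu_0)$ holds since $F$ right-divides both operators, and the reverse inclusion follows from the B\'ezout identity. Hence $d=\ord(F)=\dim V(\lambda_0,\mu_0)$, and Theorem \ref{thm-wilson} gives $\dim V(\lambda_0,\mu_0)=r$ at the non-singular point $(\lambda_0,\mu_0)$. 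Therefore $d=r$, as claimed. As a cross-check of the lower bound one may argue directly from the excerpt: the inequality $\ord(\cF(\lambda_0,\mu_0))\le d$ combined with $\ord(\gcrd(P-\lambda_0,Q-\mu_0))=r$ already yields $r\le d$.

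The main obstacle I anticipate is the justification that the field-theoretic gcrd $\cF(\lambda,\mu)$ specializes, at a generic point, to the genuine gcrd in $\Sigma[\partial]$ with its order intact: this rests on the leading-coefficient genericity together with the specialization statement preceding Theorem \ref{thm-wilson}, and it is exactly here that the hypothesis $\Sigma=\bbC((x))$ enters, since the identification $\dim V(\lambda_0,\mu_0)=r$ is only available in that setting. Over an arbitrary differential field the analogue of Theorem \ref{thm-wilson} is open, so this route to the corollary would not close.
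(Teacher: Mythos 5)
Your argument is correct and is exactly the derivation the paper intends: the corollary is stated without proof as an immediate consequence of Theorem \ref{thm-wilson}, and your route --- specialize $\cF(\lambda,\mu)$ at a non-singular point of $\Gamma_{P,Q}$ outside $Z$ where the leading coefficient survives, identify the specialization with $\gcrd(P-\lambda_0,Q-\mu_0)$ via the B\'ezout identity so that its order equals $\dim V(\lambda_0,\mu_0)$, and invoke Wilson's count $\dim V(\lambda_0,\mu_0)=r$ --- is precisely the missing chain of implications. Your closing remark about why the hypothesis $\Sigma=\bbC((x))$ is essential matches the paper's own caveat that Theorem \ref{thm-wilson} is open for arbitrary $\Sigma$.
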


To give a closed formula of the greatest common right divisor of two differential operators we can use the differential subresultant sequence, see \cite{Cha,Li}.
Given differential operators $P$ and $Q$ in $\Sigma[\partial]$ of orders $n$ and $m$ respectively,
for $k=0,1,\ldots ,N:=\min\{n,m\}-1$ we define the matrix $S_k$ to be the coefficient matrix of the extended system of differential operator
\[\Xi_k=\big\{\partial^{m-1-k} (P-\lambda),\ldots, \partial (P-\lambda), P-\lambda, \partial^{n-1-k}(Q-\mu),\ldots ,\partial (Q-\mu), Q-\mu\big\}.\]
Observe that $S_k$ is a matrix with $n+m-2k$ rows, $n+m-k$ columns and entries in $\Sigma[\lambda,\mu]$.
For $i=0,\dots ,k$ let $S_k^i$ be the squared matrix of size $n+m-2k$ obtained by removing the columns of $S_k$ indexed by $\partial^{k},\ldots ,\partial,1$, except for the column indexed by~$\partial^{i}$. 
The {\sf subresultant sequence} of~$P-\lambda$ and~$Q-\mu$ is the next sequence of differential operators in $\Sigma[\lambda,\mu][\partial]$:
\begin{gather}\label{eq-subresseq}
\sdres_k (P-\lambda,Q-\mu):=\sum_{i=0}^k \phi_i \partial^i,\,\, k=0,\ldots ,N \mbox{ where } \phi_k=\det\big(S_k^i\big).
\end{gather}

By the subresultant theorem \cite[Theorem 4]{Cha}, we obtain a closed form of the greatest common right divisor of $P-\lambda$ and $Q-\mu$ that can be effectively computed.
Hence an explicit description of the rank~$r$ vector bundle $\cF$ over $\Gamma_{P,Q}$ is obtained \cite{PW}. 

\begin{cor}\label{thm-sdres}
Let $\Sigma=\bbC((x))$. Given commuting differential operators $P$, $Q$ in~$\Sigma [\partial]$, let us 
assume that $\rank (\coC[P,Q])=r$. 
Then the subresultants $\sdres_k (P-\lambda,Q-\mu)$ are identically zero in $\Sigma (\Gamma_{P,Q})[\partial]$, for $k=0,\ldots ,r-1$ and 
\[\cF(\lambda,\mu)=\sdres_r (P-\lambda,Q-\mu).\]
\end{cor}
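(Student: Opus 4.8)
The plan is to combine Corollary~\ref{thm-gcd}, which identifies the order of $\gcrd(P-\lambda,Q-\mu)$ in $\Sigma(\Gamma_{P,Q})[\partial]$ as exactly $r$, with the subresultant theorem of Chardin quoted as \cite[Theorem 4]{Cha}. The subresultant theorem provides the following dictionary: the first index $k$ (counting upward from $0$) at which $\sdres_k(P-\lambda,Q-\mu)$ fails to vanish identically equals the order of the greatest common right divisor, and at that index the subresultant operator $\sdres_k$ is, up to a nonzero scalar factor in the coefficient field, the $\gcrd$ itself. So the entire statement reduces to verifying that these two descriptions of the $\gcrd$-order agree and then invoking the theorem.

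First I would fix the coefficient field to be $\Sigma(\Gamma_{P,Q})[\partial]$, since the subresultant sequence in \eqref{eq-subresseq} has entries in $\Sigma[\lambda,\mu][\partial]$ and we must reduce modulo $[f]$ to land in the differential field $\Sigma(\Gamma_{P,Q})$ where $P-\lambda$ and $Q-\mu$ acquire a common right factor. By Corollary~\ref{thm-gcd} that common factor $\cF(\lambda,\mu)=\gcrd(P-\lambda,Q-\mu)$ has order exactly $r=\rank(\coC[P,Q])$. Next I would apply the subresultant theorem: in Chardin's formulation, $\sdres_k$ is the (sub)resultant that detects a common right divisor of order greater than $k$, so $\sdres_k$ vanishes identically precisely for those $k$ strictly below the true $\gcrd$-order. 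Since the $\gcrd$ has order $r$, this forces $\sdres_0=\cdots=\sdres_{r-1}=0$ in $\Sigma(\Gamma_{P,Q})[\partial]$, while $\sdres_r$ is the first nonzero member of the sequence and, by the closed-form part of the theorem, coincides with the $\gcrd$. This yields $\cF(\lambda,\mu)=\sdres_r(P-\lambda,Q-\mu)$, as claimed.

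The main obstacle I anticipate is bookkeeping around the index conventions and the scalar normalization. Chardin's theorem typically computes the $\gcrd$ only up to a nonzero factor, and one must check that the particular determinantal normalization in \eqref{eq-subresseq} (via the minors $\phi_k=\det(S_k^i)$) makes $\sdres_r$ literally equal to the monic-or-chosen normalization used for $\cF$, rather than merely proportional to it; this is where the leading-coefficient analysis lives. A secondary technical point is ensuring that reduction modulo the differential prime ideal $[f]$ commutes with forming the subresultant minors, i.e.\ that specializing the entries of $S_k$ into $\Sigma(\Gamma_{P,Q})$ does not accidentally drop the rank and produce spurious vanishing; since $[f]$ is prime (by \cite[Theorem 7]{RZ2024}) and we work over the fraction field, the determinants are computed over a domain and this is safe, but it should be stated. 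Once these normalization and specialization points are settled, the result follows directly from the stated theorems.
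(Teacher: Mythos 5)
Your proposal is correct and follows essentially the same route as the paper, which states this corollary without a separate proof and justifies it by exactly the combination you describe: Corollary~\ref{thm-gcd} pins the order of the $\gcrd$ at $r$, and the subresultant theorem \cite[Theorem 4]{Cha} then forces $\sdres_0,\ldots,\sdres_{r-1}$ to vanish in $\Sigma(\Gamma_{P,Q})[\partial]$ and identifies $\sdres_r$ with the $\gcrd$. Your added remarks on normalization (the $\gcrd$ being determined only up to a unit of $\Sigma(\Gamma_{P,Q})$, consistent with the paper's observation about the leading coefficient $\phi_r$) and on reducing the determinantal entries modulo the prime differential ideal $[f]$ are sensible clarifications of points the paper leaves implicit.
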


Observe that the leading coefficient $\phi_r$ of $\sdres_r (P-\lambda,Q-\mu)$ is a polynomial in $\Sigma[\lambda,\mu]$ and the set of points where $\sdres_r (P-\lambda,Q-\mu)$ may be zero is the finite set 
\begin{equation}\label{eq-Z}
  Z=\{(\lambda_0,\mu_0)\in \bbC^2\mid \phi_r(\lambda_0,\mu_0)=0\}.  
\end{equation}

We conjecture that $\cF$ is irreducible as a differential operator with coefficients in $\Sigma(\Gamma_{P,Q})$, that is for generic values of $\lambda$ and $\mu$, and only for a finite set of points on $\Gamma_{P,Q}$ it is reducible. The next example illustrates Corollary \ref{thm-sdres} and this conjecture.

\begin{example}[Euler operators]\label{ex-EulerOp}
In $\bbC[x^{-1}][\partial]$,  the centralizer of the Euler operator 
$$L = x^{-n}\delta (\delta-m)(\delta -2m)\ldots (\delta -m(n -1))$$
where $\delta = x \partial$,  is proved in \cite{MP2023} to be $\cZ(L)=\bbC [L,B]$, where
\[B=x^{-m}\delta (\delta-n)(\delta -2n)\ldots (\delta -n(m -1)).\]
Moreover $\rank (\cZ(L))=(n,m)$ is the greatest common divisor of $n$ and $m$. It is also proved that $L^m=B^n$.    

For $n=4$ and $m=6$, let us compute the differential resultant 
\[\dres (L-\lambda,B-\mu)=(\mu^2-\lambda^3)^2.\]

The spectral curve $\Gamma_{L,B}$ of the pair $L, B$ is thus defined by $\lambda^3 - \mu^2$. We can set $\Sigma=\bbC(x)$ and consider the new coefficient field $\Sigma(\Gamma_{L,B})$, which is the quotient field of 
\[\Sigma[\Gamma_{L,B}]=\frac{\Sigma[\lambda,\mu]}{[\lambda^3 - \mu^2]}.\]

By Corollary \ref{thm-gcd} the greatest common right divisor of $L-\lambda$ and $B-\mu$ is
\[\cF(\lambda,\mu)=\sdres_2(L-\lambda,B-\mu).\]
The second differential  subresultant equals
\[\cF(\lambda,\mu)=(\lambda x^4 - 560)\left( \frac{-\mu x^2 + 20\lambda}{x^6}+\frac{-5(3\lambda x^4 - 1232)}{x^9}\partial+\frac{\lambda x^4 - 560}{x^8} \partial^2\right).\]
Observe that $\phi_2(\lambda,\mu)=\frac{(\lambda x^4 - 560)^2}{x^8}$ is non zero for every $(\lambda,\mu)$ in $\bbC^2$. The first differential subresultant equals $(\lambda x^4 - 560) (\lambda^3 - \mu^2)/x^4$, which is identically zero in $\Sigma(\Gamma_{P,Q})[\partial]$.

At the singular point $\cF(0,0)=(\partial-\frac{11}{x})\partial$, while $\cF(1,1)$ is irreducible. We conjecture that $\cF(\lambda_0,\mu_0)$ will be irreducible for every non singular $\lambda_0,\mu_0$ of $\Gamma_{L,B}$. 
\end{example}

\medskip

Let us go back to differential operators with coefficients in an arbitrary differential field $\Sigma$.
If $\rank (\coC[P,Q])=1$ then it can be proved as \cite[Lemma 8]{RZ2024} that $\cF$ is the order one differential operator determined by the first differential subresultant $\sdres_1(P-\lambda,Q-\mu)=\phi_0+\phi_1 \partial$. 

\begin{lem}\label{lem-phi}
If $\rank(\coC[P,Q])=1$ then $\phi_{i}\notin [f]$. 
\end{lem}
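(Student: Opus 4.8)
The plan is to translate membership in the differential ideal $[f]$ into ordinary polynomial divisibility, and then to rule out the only degenerate shape the order-one factor $\cF$ could take. First I would observe that, since $f\in\coC[\lambda,\mu]$ has constant coefficients and $\partial\lambda=\partial\mu=0$, one has $\partial f=0$; hence the differential ideal $[f]$ coincides with the ordinary ideal $(f)=f\,\Sigma[\lambda,\mu]$. Consequently $\phi_i\in[f]$ if and only if $f\mid\phi_i$ in $\Sigma[\lambda,\mu]$, and the statement to prove becomes: $f$ divides neither $\phi_0$ nor $\phi_1$.

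For the leading coefficient $\phi_1$ I would argue directly from the shape of $\cF$. Because $\rank(\coC[P,Q])=1$, the operator $\cF=\sdres_1(P-\lambda,Q-\mu)=\phi_0+\phi_1\partial$ is the greatest common right divisor of $P-\lambda$ and $Q-\mu$ in $\Sigma(\Gamma_{P,Q})[\partial]$, and it has order exactly one (the arbitrary-field statement recalled just above, following \cite[Lemma 8]{RZ2024}). An operator of order one must have nonzero leading coefficient in $\Sigma(\Gamma_{P,Q})$, so $\phi_1\neq 0$ modulo $[f]$, that is $\phi_1\notin[f]$.

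The substantive case is $\phi_0$. Suppose, for contradiction, that $\phi_0\in[f]$. Then modulo $[f]$ we have $\cF=\phi_1\partial$, and since $\phi_1$ is a unit of the field $\Sigma(\Gamma_{P,Q})$, the monic greatest common right divisor of $P-\lambda$ and $Q-\mu$ is simply $\partial$. In particular $\partial$ right-divides $P-\lambda$ over $\Sigma(\Gamma_{P,Q})$. Writing $P=\sum_{i=0}^n a_i\partial^i$ with $a_i\in\Sigma$, the remainder of the Euclidean right-division of $P-\lambda$ by $\partial$ equals $(P-\lambda)(1)=a_0-\lambda$; hence $a_0-\lambda$ must vanish in $\Sigma(\Gamma_{P,Q})$, so $\lambda-a_0\in[f]$ and therefore $f\mid(\lambda-a_0)$ in $\Sigma[\lambda,\mu]$.

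To close I would invoke a degree count: the differential resultant satisfies $\deg_\mu\dres(P-\lambda,Q-\mu)=\ord(P)\geq 1$, and since $\dres(P-\lambda,Q-\mu)=f^{r}$ by Theorem \ref{thm-hPQ} with $r\mid\ord(P)$, we get $\deg_\mu f=\ord(P)/r\geq 1$. But $\lambda-a_0$ has $\mu$-degree zero, so a polynomial of positive $\mu$-degree such as $f$ cannot divide the nonzero polynomial $\lambda-a_0$. This contradiction forces $\phi_0\notin[f]$, completing the argument. The one genuinely delicate point is this last step: everything hinges on excluding the possibility that the order-one factor degenerates to $\partial$, which is exactly where the nontriviality of the curve (through $\deg_\mu f\geq 1$) enters; by comparison the reduction $[f]=(f)$ and the leading-coefficient case are routine.
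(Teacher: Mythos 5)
Your reduction of $[f]$ to the ordinary ideal $(f)$ is fine, and your closing degree count is the right kind of tool, but the argument as a whole is circular. Both your treatment of $\phi_1$ (``$\cF=\sdres_1$ is the gcrd and has order exactly one, hence its leading coefficient is nonzero modulo $[f]$'') and your treatment of $\phi_0$ (which starts from the same premise) take as given that $\sdres_1(P-\lambda,Q-\mu)$ is an order-one operator equal to the greatest common right divisor over $\Sigma(\Gamma_{P,Q})$. Within the paper that statement is the conclusion of Theorem \ref{thm-SSPlanar}, whose proof consists precisely of Lemma \ref{lem-phi} together with the first subresultant theorem: one must first know $\phi_1\notin[f]$ in order to conclude that $\sdres_1$ has order one and is therefore the gcrd. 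There is no independent source for the order-one claim over an arbitrary $\Sigma$: Corollary \ref{thm-gcd} gives it only for $\Sigma=\bbC((x))$ via Wilson's theorem, and the paper explicitly records that extending Theorem \ref{thm-wilson} to arbitrary $\Sigma$ is open. So you are using the theorem to prove the lemma it depends on.

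The paper's own proof is a direct degree count that sidesteps all of this. Since $\rank(\coC[P,Q])=1$, the exponent $r$ in $h=\dres(P-\lambda,Q-\mu)=f^{r}$ is $1$ (the weighted leading form $\mu^{n}-\lambda^{m}$ with $\gcd(n,m)=1$ is irreducible, so it cannot be a proper power), hence $\deg_\mu f=\ord(P)=n$. On the other hand each $\phi_i=\det\big(S_1^i\big)$ is built from only $n-1$ rows coming from the operators $\partial^{j}(Q-\mu)$, each contributing $\mu$ linearly, so $\deg_\mu\phi_i\leq n-1<\deg_\mu f$; a nonzero polynomial of $\mu$-degree less than $n$ cannot lie in $(f)=[f]$. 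Your final observation that $\deg_\mu f\geq 1$ rules out only the degenerate shape $\cF=\partial$; pushing the same count up to $\deg_\mu f=n$ and applying it directly to $\phi_0$ and $\phi_1$ is what makes the lemma self-contained and breaks the circularity.
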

\begin{proof}
By the construction of the matrices $S_1^i$ as in \eqref{eq-subresseq}, the degree in $\lambda$ of $f$ is $\ord(Q)=m$ and the degree in $\mu$ is $\ord(P)=n$. By the construction of the matrices $S_1^i$ in \eqref{eq-subresseq}, the degree in $\mu$ of $\phi_{i}(\lambda,\mu)$ is less than or equal to $n-1$. Therefore $\phi_{i}$ does not belong to $[\BC(P,Q)]=[f]$. 
\end{proof}

\begin{thm}\label{thm-SSPlanar}
Let $\Sigma$ be a differential field whose field of constants $\coC$ is algebraically closed and has zero characteristic. Given commuting differential operators $P$ and $Q$ in $\Sigma [\partial]$, let $\BC(P,Q)=(f)$. If $\rank (P,Q)=1$ then $\cF=\sdres_1(P-\lambda,Q-\mu)$ is the greatest common right divisor over $\Sigma(\Gamma_{P,Q})$ of $P-\lambda$ and $Q-\mu$.
\end{thm}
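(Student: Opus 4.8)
The plan is to reduce the statement to the definition of the greatest common right divisor together with the subresultant theorem and Lemma \ref{lem-phi}. First I would invoke the subresultant theorem \cite[Theorem 4]{Cha}: for differential operators $P-\lambda$ and $Q-\mu$ with coefficients in the differential domain $\Sigma[\lambda,\mu]$, the subresultant sequence $\sdres_k(P-\lambda,Q-\mu)$ computes (up to a nonzero factor) the greatest common right divisor of $P-\lambda$ and $Q-\mu$ once we localize appropriately and pass to a field where the relevant leading coefficients do not vanish. Working over the field $\Sigma(\Gamma_{P,Q})$, the hypothesis $\rank(\coC[P,Q])=1$ tells us, via the same circle of ideas as in Corollary \ref{thm-sdres}, that the gcrd of $P-\lambda$ and $Q-\mu$ over $\Sigma(\Gamma_{P,Q})$ has order exactly $1$, so the first nonvanishing element of the subresultant chain should be $\sdres_1$.

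The key steps are as follows. I would first show that $\sdres_0(P-\lambda,Q-\mu)=\dres(P-\lambda,Q-\mu)$ is identically zero in $\Sigma(\Gamma_{P,Q})[\partial]$: this is precisely Theorem \ref{thm-hPQ}, since $h=\dres(P-\lambda,Q-\mu)=f^r$ lies in $\BC(P,Q)=(f)$, hence in the differential ideal $[f]$ defining $\Sigma[\Gamma_{P,Q}]$, and so vanishes in the quotient. This forces the order of the gcrd to be at least $1$. Next I would show that $\sdres_1(P-\lambda,Q-\mu)=\phi_0+\phi_1\partial$ is \emph{not} identically zero in $\Sigma(\Gamma_{P,Q})[\partial]$, which is exactly the content of Lemma \ref{lem-phi}: since $\phi_0,\phi_1\notin[f]$, the operator $\sdres_1$ survives in the quotient ring. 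By the subresultant theorem, the first index $k$ at which $\sdres_k$ is nonzero yields the gcrd of order $k$; combining the vanishing of $\sdres_0$ with the nonvanishing of $\sdres_1$ identifies $\cF=\sdres_1(P-\lambda,Q-\mu)$ as the gcrd over $\Sigma(\Gamma_{P,Q})$, of order exactly $1=\rank(P,Q)$.

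The main obstacle will be justifying that the subresultant theorem, classically stated for operators over a differential \emph{field}, applies cleanly over the quotient field $\Sigma(\Gamma_{P,Q})$ of the differential domain $\Sigma[\Gamma_{P,Q}]=\Sigma[\lambda,\mu]/[f]$. Two points need care: first, that $[f]$ is a prime differential ideal so that $\Sigma[\Gamma_{P,Q}]$ is a domain and $\Sigma(\Gamma_{P,Q})$ is a genuine differential field — this is supplied by \cite[Theorem 7]{RZ2024}, cited just above the statement; and second, that the specialization sending the coefficients of $P-\lambda$ and $Q-\mu$ into $\Sigma(\Gamma_{P,Q})[\partial]$ is compatible with the formation of subresultants, i.e.\ that $\sdres_k$ computed over $\Sigma[\lambda,\mu]$ and then reduced modulo $[f]$ agrees with the genuine subresultant over $\Sigma(\Gamma_{P,Q})$. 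Because subresultants are given by determinantal (polynomial) formulas in the coefficients, this compatibility is formal, but I would state it explicitly. Granting these two points, the argument is the clean concatenation above; the substantive mathematical input is entirely in Theorem \ref{thm-hPQ} and Lemma \ref{lem-phi}, and the role of the rank-one hypothesis is to guarantee that the chain drops to zero precisely at index $1$.
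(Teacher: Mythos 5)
Your proposal is correct and follows essentially the same route as the paper: the paper's proof also combines Lemma \ref{lem-phi} (to see that $\sdres_1$ has order one over $\Sigma(\Gamma_{P,Q})$) with a subresultant theorem (it cites \cite[Theorem B.4]{RZ2024} rather than \cite[Theorem 4]{Cha}) to identify $\sdres_1$ as the gcrd. Your additional remarks on the vanishing of $\sdres_0$ via Theorem \ref{thm-hPQ} and on the compatibility of the determinantal formulas with reduction modulo $[f]$ make explicit what the paper leaves implicit, but they do not change the argument.
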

\begin{proof}
    By Lemma \ref{lem-phi}, $\sdres_1(P-\lambda,Q-\mu)$ is an order one operator and by the first subresultant theorem \cite[Theorem B.4]{RZ2024} it is the greatest common divisor $\cF$ of $P-\lambda$ and $Q-\mu$. 
\end{proof}

In some cases centralizers are isomorphic to coordinate rings of planar spectral curves. For instance, if $L$ is a differential operator of order $2$ then $\cA=\cZ (L)=\coC [L,A]$ with $\ord(A)\equiv_2 1$. See also the family of Euler operators in Example \ref{ex-EulerOp}.

\subsection{Factorization over space curves}

Let us assume that $\cA=\cZ(L)$ for an operator $L$ of order $3$ and whose basis as a $\coC [L]$-module is $\{1,A_1,A_2\}$. Then by \cite[Theorem 10]{RZ2024} $\BC(\cA)=(f_1,f_2,f_3)$ and by \cite[Theorem 11]{RZ2024} the ideal $[\BC(\cA)]$ generated by $\BC(\cA)$ in $\Sigma[\lambda,\mu_1,\mu_2]$ is a prime differential ideal. This allows to define the differential domain 
\begin{equation}
\Sigma[\Gamma_{\cA}]=\frac{\Sigma[\lambda,\mu_1,\mu_2]}{[\BC(\cA)]},
\end{equation}
with the natural derivation and extend this derivation to its fraction field $ \Sigma(\Gamma_{\cA})$.

\begin{thm}\label{thm-SSSpatial}
Let us assume that $\cA=\cZ(L)$ for an operator $L$ of \textcolor{teal}{order $3$} whose basis as a $\coC [L]$-module is $\{1,A_1,A_2\}$. 
Then the monic greatest common right divisor 
$\cF$ over $\Sigma(\Gamma_{\cA})$ 
of $L-\lambda$, $A_1-\mu_1$ and $A_2-\mu_2$, equals the monic $\gcrd(L-\lambda,A_i-\mu_i)$, $i=1,2$.
\end{thm}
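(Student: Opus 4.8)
The plan is to reduce everything to the pairwise divisor and exploit that, for an order-$3$ centralizer, the rank is one. Write $K=\Sigma(\Gamma_{\cA})$, and recall that on $K$ the variables $\lambda,\mu_1,\mu_2$ are constants for the extended derivation. Set $\cF_1:=\gcrd(L-\lambda,A_1-\mu_1)$ and $\cF_2:=\gcrd(L-\lambda,A_2-\mu_2)$ (monic, over $K$). I would show that $\cF_1$ already right-divides the third operator $A_2-\mu_2$ (and symmetrically $\cF_2$ divides $A_1-\mu_1$). Granting this, $\cF_1$ is a common right divisor of $L-\lambda,A_1-\mu_1,A_2-\mu_2$, hence $\cF_1\mid_r\cF$; conversely $\cF$ divides $L-\lambda$ and $A_1-\mu_1$, so $\cF\mid_r\cF_1$, and as both are monic, $\cF=\cF_1$, and likewise $\cF=\cF_2$.

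The first ingredient is the $\coC[L]$-module structure from Theorem \ref{thm-centralizer}. Since $\ord(L)=3$ with basis $\{1,A_1,A_2\}$ and $\ord(A_i)\equiv_3 i$, the element $A_1^2$ lies in $\cZ_2$, so it decomposes uniquely as
\[A_1^2=p(L)A_2+q(L)A_1+s(L),\qquad p,q,s\in\coC[t].\]
The three summands have orders in the residue classes $2,1,0$ modulo $3$, so no leading terms cancel and the class-$2$ part must carry $\ord(A_1^2)$; hence $p\neq 0$. Under the isomorphism $\cA\simeq\coC[\Gamma_{\cA}]$ this becomes $p(\lambda)\mu_2=\mu_1^2-q(\lambda)\mu_1-s(\lambda)$, and since $\lambda$ is transcendental over $\coC$ in the function field we have $p(\lambda)\neq 0$, so $\mu_2$ is a rational function of $\lambda,\mu_1$ (equivalently, the projection $\Gamma_{\cA}\to\Gamma_{L,A_1}$ is birational). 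The same computation on $A_2^2\in\cZ_1$ gives the symmetric relation expressing $\mu_1$ rationally in $\lambda,\mu_2$.

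The key step is $\cF_1\mid_r(A_2-\mu_2)$, which I would prove by reducing the operator identity $p(L)A_2=A_1^2-q(L)A_1-s(L)$ modulo the left ideal $K[\partial]\,\cF_1$, whose members are exactly the operators right-divisible by $\cF_1$; write $\equiv$ for this congruence. Because $\cF_1$ right-divides $L-\lambda$ and $A_1-\mu_1$, one has $L\equiv\lambda$ and $A_1\equiv\mu_1$, so the right-hand side reduces to $\mu_1^2-q(\lambda)\mu_1-s(\lambda)=p(\lambda)\mu_2$, giving $p(L)(A_2-\mu_2)\equiv 0$. On the other hand, commutativity $L(A_2-\mu_2)=(A_2-\mu_2)L$ together with $L-\lambda=N_1\cF_1$ yields $L(A_2-\mu_2)\equiv\lambda(A_2-\mu_2)$, whence $p(L)(A_2-\mu_2)\equiv p(\lambda)(A_2-\mu_2)$. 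Cancelling the nonzero scalar $p(\lambda)$ gives $A_2-\mu_2\equiv 0$, i.e. $\cF_1\mid_r(A_2-\mu_2)$, as required.

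The main obstacle is precisely this passage from the planar divisor $\cF_1$ to a simultaneous divisor of all three operators, and the fact that makes it go through is the birationality of the projection onto each coordinate plane: it is the rational dependence of the third coordinate that forces the ``eigenvalue'' of $A_2$ on the common factor to be $\mu_2$ rather than another branch of the curve. Informally, this says that on the one-dimensional space of common solutions of $L-\lambda$ and $A_1-\mu_1$ — one-dimensional because $\rank(\coC[L,A_1])=1$, as $\ord(L)$ is prime — the operator $A_2$ must act by the scalar $\mu_2$; the algebraic cancellation of $p(L)$ via the commutation congruence is the only point where care is needed, since the constant field $\coC(\Gamma_{\cA})$ is not algebraically closed and I deliberately avoid adjoining solutions.
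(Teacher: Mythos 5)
Your argument is correct, and it is a genuinely different route from the paper's. The paper's proof is essentially a citation: it invokes \cite[Proposition 1, Lemma 9 and Theorem 12]{RZ2024}, i.e.\ the first-subresultant machinery, to produce $\cF$ explicitly as the monic order-one operator $\partial+\phi_0^i/\phi_1^i$ coming from $\sdres_1(L-\lambda,A_i-\mu_i)$, which simultaneously identifies $\cF$ with each pairwise $\gcrd$ and establishes $\ord(\cF)=1$. You instead prove the identity of the three $\gcrd$'s directly from Goodearl's $\coC[L]$-module structure: the decomposition $A_1^2=p(L)A_2+q(L)A_1+s(L)$ with $p\neq 0$ (forced by the distinct residues of the orders mod $3$), reduced modulo the left ideal $\Sigma(\Gamma_{\cA})[\partial]\,\cF_1$, together with the commutation relation $L(A_2-\mu_2)=(A_2-\mu_2)L$ to get $p(L)(A_2-\mu_2)\equiv p(\lambda)(A_2-\mu_2)$ and cancel the invertible central scalar $p(\lambda)$. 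All the congruence manipulations are legitimate (you only ever left-multiply, or right-multiply by elements already known to lie in the left ideal, and $\lambda,\mu_1,\mu_2$ are central constants), so the step $\cF_1\mid_r(A_2-\mu_2)$ is sound, and the mutual-divisibility conclusion follows. What your approach buys is self-containedness and a mechanism that should generalize to centralizers with more generators of prime order, since it never touches subresultants; what it does not give is the explicit closed formula for $\cF$ nor, by itself, the fact that $\ord(\cF)=1$ — for that one still needs the rank-one planar result (Theorem \ref{thm-SSPlanar} or the first subresultant theorem), which the paper's proof delivers in the same stroke. Since the theorem as stated only asserts the equality of the $\gcrd$'s, your proof covers the literal claim.
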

\begin{proof}
In this situation \cite[Proposition 1 and Lemma 9]{RZ2024} are satisfied and
\cite[Theorem 12]{RZ2024}  applies to conclude that $$\cF=\partial+\phi, \mbox{ with }\phi=\frac{\phi_0^i}{\phi_1^i}+[\BC(\cA)]$$
a monic order one operator obtained by computing the first differential subresultants
\[\sdres_1(L-\lambda,A_i-\mu)=\phi_0^i+\phi_1^i \partial.\]
\end{proof}

\begin{example}\label{ex-3gen}
 Consider the differential field $\Sigma=\coC \langle\cosh(x)\rangle$ with derivation $\partial=d/dx$ and 
\[L=\frac{-12 \sinh(x)}{\cosh(x)^3} +\frac{6}{\cosh(x)^2} \partial + \partial^3.\]
For convenience we denote $\eta=\cosh(x)$.
The centralizer equals $$\cA=\cZ(L)=\coC[L,A_1,A_2]$$ with
\begin{align*}
A_1=& \frac{16\eta^2 - 24}{\eta^4} + \frac{-24 \eta'}{\eta^3}\partial + \frac{(-4/3)\eta^2 + 8}{\eta^2}\partial^2 + \partial^4,
\\
A_2=&
\frac{(-160/3) \eta^2 + 80}{\eta^5} \eta' +\frac{(16/9)\eta^4 + (200/3) \eta^2 - 80}{\eta^4}  \partial\\
&+ \frac{-40\eta'}{\eta^3} \partial^2 + \frac{10}{\eta^2} \partial^3 + \partial^5.
\end{align*}
The differential resultants, denoted as in Section \ref{sec-SpaceSC}, are
\begin{align*}
f_1(\lambda,\mu_1)=&\lambda^4- \mu_1^3-4\lambda^2\mu_1  - \frac{64}{27}\lambda^2, \\
f_2(\lambda,\mu_2)= &\lambda^5 - \mu_2^3+ \frac{16}{3}\mu_2\lambda^2 +\frac{4096}{729}\lambda,\\
f_3(\mu_1,\mu_2)=&\mu_2^4-\mu_1^5 - \frac{20}{3}\mu_2^2\mu_1^2 -\frac{64}{9} \mu_1^4 - \frac{704}{27}\mu_2^2\mu_1 - \frac{2048}{81}\mu_1^3 - \frac{4096}{243}\mu_2^2\\
&- \frac{32768}{729}\mu_1^2 - \frac{262144}{6561}\mu_2.
\end{align*}
By Theorem \ref{thm-BCL} then $\BC(\cZ(L))=(f_1,f_2,f_3)$.
By Theorem \ref{thm-SSSpatial} the greatest common right divisor of $L-\lambda$, $A_1-\mu_1$ and $A_2-\mu_2$ is $\cF=\partial-\phi$ with
\[\phi=\frac{-59049  \cosh(x)^2 s^3 + 4374  \sinh(x) s^2  \cosh(x) - 108  \cosh(x)^2 s - 12 \tanh(x) + 324 s}{2187  \cosh(x)^2 s^2 - 162  \cosh(x)  \sinh(x) s + 4  \cosh(x)^2 - 6}\]
obtained by
replacing $(\lambda,\mu_1,\mu_2)$ by a parametrization 
\[\chi(\tau)=(19683 s^3, 531441 s^4 - 972 s^2, 14348907s^5 + 48s)\] 
of $\Gamma_{\cA}$ in 
$\sdres(L-\lambda,A_i-\mu_i)$, $i=1,2$.
The common factor $\cF$ defines the line bundle on the spectral curve $\Gamma_{\cA}$.
\end{example}

\section{Spectral Picard-Vessiot fields}\label{sec-PV}

Following Drach's ideology \cite{Drach3}, we aim to compute the minimal field containing the solutions of the spectral problem $(L-\lambda)(y)=0$. 
Consider a finitely generated commutative $\coC$-algebra $\cA$ in $\Sigma [\partial]$ as in \eqref{eq-cA} and the field of the curve $\Gamma_{\cA}$, the fraction field $\coC(\Gamma_{\cA})$ of the domain 
\[\coC [\Gamma_{\cA}]=\frac{\coC [\lambda,\overline{\mu}]}{\BC(\cA)}\simeq \cA.\] 
It is important to note that $\coC(\Gamma_{\cA})$ is not an algebraically closed field, which is required to prove existence of classical Picard-Vessiot fields \cite{VPS}. Differential Galois theories do exist in the case of non algebraically closed field of constants, \cite{CrespoH},   \cite{MO2018} but, to fully exploit its special properties, we aim to develop a Picard-Vessiot theory adapted to spectral problems $L(y)=\lambda y$, for a generic algebraic parameter $\lambda$ over the field of constants $\coC$.

\medskip

Let $\cA=\cZ(L)=\coC[L,A]$ be the centralizer of a Schr\" odinger operator $L=-\partial^2+u$, $u\in\Sigma$. Denote by $\partial-\phi_+$, resp. $\partial-\phi_-$, the monic greatest common right divisor of $L-\lambda$ and $A-\mu$, resp. $A+\mu$. Let $\Psi_+$, resp. $\Psi_-$, be a non zero solution of $(L-\lambda)(y)=0$ defined by 
\[\partial(\Psi_+)=\phi_{+}\Psi_+, \mbox{ resp. } \partial(\Psi_-)=\phi_{-}\Psi_-.\]
The solutions $\Psi_+$ and $\Psi_-$ belong to the differential closure of $\Sigma(\Gamma_{\cA})$ and by \cite[Lemma 4.5]{MRZ2} are a fundamental system of 
 solutions of $(L-\lambda)(y)=0$.

\begin{thm}[Existence of spectral Picard-Vessiot fields \cite{MRZ2}]
Let $\cA$ be the centralizer of a Schr\" odinger operator $L=-\partial^2+u$, where $u\in \Sigma$. 
The following statements hold:
\begin{enumerate}
\item The field of constants of $(\Sigma (\Gamma_{\cA}),\tilde{\partial})$ is $\coC(\Gamma_{\cA})$.
\item  The field of constants of the differential field extension
\begin{equation}
    \cE(L-\lambda)=\Sigma (\Gamma_{\cA})\langle \Psi_+\rangle=\Sigma (\Gamma_{\cA})\langle \Psi_+,\Psi_{-}\rangle,
\end{equation}
of $\Sigma (\Gamma_{\cA})$ is $\coC(\Gamma_{\cA})$.
\end{enumerate}
We call $\cE(L-\lambda)$ a {\sf spectral Picard-Vessiot (PV) field over $\Gamma_{\cA}$ of} $(L-\lambda)(y)=0$.
\end{thm}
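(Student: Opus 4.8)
The plan is to treat the two statements separately, reducing the second to a single exponential extension and isolating the one genuinely transcendental input.

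\textbf{Part (1).} First I would record the easy inclusion $\coC(\Gamma_{\cA})\subseteq C$, where $C$ denotes the field of constants of $(\Sigma(\Gamma_{\cA}),\tilde\partial)$: indeed $\coC$ consists of constants of $\Sigma$ and $\tilde\partial\lambda=\tilde\partial\mu=0$ by construction. For the reverse inclusion I would use the tower $\Sigma\subset\Sigma(\lambda)\subset\Sigma(\lambda)(\mu)=\Sigma(\Gamma_{\cA})$. Since $\lambda$ is a fresh indeterminate it is transcendental over $\Sigma$ and a constant, so the standard lemma on adjunction of a transcendental constant gives that the constants of $\Sigma(\lambda)$ are exactly $\coC(\lambda)$. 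Because $L$ is Schr\"odinger, $\ord(L)=2$ and $\rank(\cA)=1$, so the irreducible $f$ with $\BC(L,A)=(f)$ has $\mu$-degree $\ord(L)=2$; thus $[\Sigma(\Gamma_{\cA}):\Sigma(\lambda)]=2$ with basis $\{1,\mu\}$, and differentiating $f(\lambda,\mu)=0$ (whose coefficients are constants, $f_\mu\neq0$ in characteristic zero) forces $\tilde\partial\mu=0$. For $z=a+b\mu$ with $a,b\in\Sigma(\lambda)$ one has $\tilde\partial z=\partial a+(\partial b)\mu$, which vanishes iff $\partial a=\partial b=0$, i.e. $a,b\in\coC(\lambda)$. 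Hence $C=\coC(\lambda)+\coC(\lambda)\mu=\coC(\Gamma_{\cA})$, proving (1).

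\textbf{Part (2), reduction.} Write $K_1=\Sigma(\Gamma_{\cA})$, so $C(K_1)=\coC(\Gamma_{\cA})$ by (1). Since $\partial\Psi_\pm=\phi_\pm\Psi_\pm$ with $\phi_\pm\in K_1$, the differential field generated by $\Psi_+,\Psi_-$ is the ordinary field $K_1(\Psi_+,\Psi_-)$. I would first use the hyperelliptic involution $\sigma\in\mathrm{Aut}(K_1/\Sigma(\lambda))$, $\sigma(\mu)=-\mu$, which commutes with $\tilde\partial$; as $\partial-\phi_-=\gcrd(L-\lambda,A+\mu)$ is obtained from $\partial-\phi_+=\gcrd(L-\lambda,A-\mu)$ by $\mu\mapsto-\mu$, we get $\phi_-=\sigma(\phi_+)$, whence $\phi_++\phi_-$ lies in the fixed field $\Sigma(\lambda)$. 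Consequently $\Psi_+\Psi_-$ is hyperexponential over $K_1$ with logarithmic derivative $\phi_++\phi_-\in\Sigma(\lambda)$, and the Wronskian $W=\Psi_+\Psi_-(\phi_--\phi_+)$ of the two solutions of $y''=(u-\lambda)y$ satisfies $W'=0$. Using the explicit description of $\Psi_+\Psi_-$ as the spectral polynomial in \cite{MRZ2} (its logarithmic derivative $\phi_++\phi_-$ is itself a logarithmic derivative of an element of $\Sigma(\lambda)$), one gets $\Psi_+\Psi_-\in K_1$; hence $W\in K_1\cap C(\cE)=\coC(\Gamma_{\cA})$ and, crucially, $\Psi_-=(\Psi_+\Psi_-)/\Psi_+\in K_1(\Psi_+)$, so that $\cE(L-\lambda)=K_1\langle\Psi_+\rangle=K_1(\Psi_+)$, which also proves the asserted field equality.

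\textbf{Part (2), constants.} It then remains to prove that the single hyperexponential extension $K_1(\Psi_+)/K_1$, with $\Psi_+'/\Psi_+=\phi_+\in K_1$, adds no constants. The key point I would establish is that $\Psi_+$ is transcendental over $K_1$. Granting this, the standard argument applies verbatim even though $\coC(\Gamma_{\cA})$ is not algebraically closed: writing a putative constant as a reduced rational function in $\Psi_+$ and using that $\partial$ multiplies $\Psi_+^{\,i}$ by $i\phi_+$, two distinct powers occurring in numerator or denominator would make some $\Psi_+^{\,i-j}$ algebraic over $K_1$; so the constant is a single $K_1$-multiple of a power of $\Psi_+$, and constancy forces it into $C(K_1)=\coC(\Gamma_{\cA})$. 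This yields $C(\cE)=\coC(\Gamma_{\cA})$.

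The transcendence of $\Psi_+$ over $K_1$ is the hard part, and it is exactly where the non-algebraically-closed field $\coC(\Gamma_{\cA})$ forbids shortcuts, so that the black-box existence of Picard--Vessiot extensions cannot be invoked. Were $\Psi_+$ algebraic of degree $N$, the conjugates would share the logarithmic derivative $\phi_+\in K_1$, giving $N\phi_+=g'/g$ for some $g\in K_1^\times$; tracking the coefficient conditions $a_i'+i\phi_+a_i=0$ then produces constants $c_i=a_i\Psi_+^{\,i}$ with $c_i^{\,N}\in\coC(\Gamma_{\cA})$, i.e. genuinely new $N$-th roots. To exclude this I would show that no integer multiple of $\phi_+$ is a logarithmic derivative in $K_1$: applying $\sigma$ and combining reduces the question to the anti-invariant $w=\phi_+-\phi_-$ over $\Sigma(\lambda)$, whose logarithmic derivative is $-(\phi_++\phi_-)$, and one derives a contradiction from the genus-$g$ geometry of $\Gamma_{\cA}$ (equivalently, from genericity of $\lambda$ as a transcendental over $\coC$), the differential Galois group of $L-\lambda$ over $K_1$ being a nontrivial connected subgroup of the multiplicative group by the commutativity and rank-one property of $\cZ(L)$. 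This transcendence statement is the crux on which the whole constant computation rests.
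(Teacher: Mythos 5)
The paper itself contains no proof of this theorem: it is quoted from \cite{MRZ2}, so there is no in-source argument to compare yours against step by step. Judged on its own terms, your Part (1) is correct and is the expected argument: the tower $\Sigma\subset\Sigma(\lambda)\subset\Sigma(\lambda)[\mu]/(f)$, the standard fact that adjoining a transcendental constant gives constant field $\coC(\lambda)$, and the $\Sigma(\lambda)$-basis $\{1,\mu\}$ (you should say why $f$, irreducible over $\coC$, stays irreducible over $\Sigma$ --- for the hyperelliptic $f=\mu^2-R_{2g+1}(\lambda)$ this follows from the odd degree of $R_{2g+1}$). The reduction in Part (2) --- $\phi_-=\sigma(\phi_+)$, $\Psi_+\Psi_-\in\Sigma(\Gamma_{\cA})$ after normalizing $\Psi_-$, hence $\cE(L-\lambda)=K_1(\Psi_+)$ and the Wronskian lies in $K_1$ --- is also sound, modulo the citation of the explicit wave-polynomial formula from \cite{MRZ2}.

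The genuine gap is exactly where you flag it, and the repair you sketch does not close it. For $K_1(\Psi_+)$ with $\Psi_+'/\Psi_+=\phi_+\in K_1$, the appearance of new constants (whether $\Psi_+$ is algebraic or transcendental) is equivalent to the existence of a nonzero integer $n$ and $b\in K_1^{\times}$ with $b'/b=n\phi_+$; your intermediate claim that two distinct powers in a putative constant would make some $\Psi_+^{\,i-j}$ \emph{algebraic} over $K_1$ is not the correct conclusion --- what one actually extracts is precisely such a logarithmic-derivative relation, so both the ``algebraic'' and ``transcendental'' cases collapse to the same statement. Everything therefore hinges on proving that $n\phi_+$ is never a logarithmic derivative in $K_1$ for $n\neq 0$, and here your argument is circular: you invoke the differential Galois group of $L-\lambda$ over $K_1$ being a nontrivial connected subgroup of the multiplicative group, but that group is only defined once a no-new-constants Picard--Vessiot extension has been constructed, which is the content of the theorem being proved. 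The appeal to ``genus-$g$ geometry'' or ``genericity of $\lambda$'' is a gesture, not an argument. This step must be carried out concretely from the explicit form of $\phi_{\pm}$ in terms of $\mu$ and the wave polynomial $\varphi$ (as is done in \cite{MRZ2}); without it the decisive half of statement (2) is unproved.
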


At all but a finite number of points $(\lambda_0,\mu_0)$ of the spectral curve, the spectral PV field $\cE(L-\lambda)$ specializes to the PV field of the equation $(L-\lambda_0)(y)=0$, a differential field extension of $\Sigma$ \cite{MRZ2}. Classical differential Galois groups of spectral problems $L(y)=\lambda_0 y$, $\lambda_0\in \coC$ were studied in \cite{Grig} for algebro-geometric operators, in \cite{Brez} for Schr\" odinger operators and in \cite{AMW}, for Schr\" odinger operators not restricted to the algebro-geometric case. 

\medskip

In the case  $\cA=\cZ(L)$ with $\ord(L)=3$, we conjecture that $\coC(\Gamma_{\cA})$ is the field of constants of $\Sigma (\Gamma_{\cA})$. As a continuation of the results in \cite{RZ2024}, we are working to prove the existence of a differential field extension 
\[\cE(L-\lambda)= \Sigma (\Gamma_{\cA}) \langle \Psi_1,\Psi_2,\Psi_3 \rangle \]
of $\Sigma (\Gamma_{\cA})$ by a fundamental system of solutions of  $(L-\lambda)(y)=0$, and that the field of constants of $\cE(L-\lambda)$ is $\coC(\Gamma_{\cA})$. 

\medskip

For $\ord(L)>3$, it remains to prove that the differential ideal $[\BC(\cA)]$ is a prime ideal in $\Sigma [\lambda,\overline{\mu}]$, before defining the new coefficient field $\Sigma (\Gamma_{\cA})$ and  computing the right factors of $L-\lambda$ over $\Sigma (\Gamma_{\cA})$. Our aim is to prove the existence of spectral Picard-Vessiot fields for ODOs with non-trivial centralizers of rank $1$. 

\medskip

Determining if a commutative algebra $\cA$ of ODOs has rank $1$ is a non trivial open problem to which differential Galois theory brings a new perspective. The differential Galois group of an ordinary differential operator $L-\lambda$ was defined in \cite{BEG}, and the centralizer of $L$ was proved to have rank $1$ if the group is commutative. More precisely in \cite{BEG} they define the differential Galois group of a  quantum complete integrable system (QCIS). It is important to note, that in the definition of differential Galois group given in \cite{BEG} the parameter $\lambda$ is a generic parameter, a generic constant with respect to the derivation. It would be interesting to understand its relation with the differential Galois group of the parametric Picard-Vessiot theory introduced by Cassidy and Singer in \cite{CS}, or appropriate extensions of this theory \cite{GGO2013}. 

Other authors have studied the differential Galois group of spectral problems $Ly=\lambda_0 y$, with $\lambda=\lambda_0$ an specialization of the spectral parameter \cite{Brez3, Grig, AMW}, by means of the differential Galois group of the classical Picard-Vessiot theory, where the field of constants is algebraically closed. Our aim is to develop algorithms to compute  differential Galois groups with parameters of spectral problems $Ly=\lambda y$, in the same spirit of the existing ones for differential Galois groups \cite{singer1996testing}, \cite{Hrushovski2002}, \cite{Feng2015} or parametrized differential Galois groups \cite{Arr}, \cite{MO2018}, to determine if the centralizer of $L$ has rank $1$. 

\medskip

\noindent{\bf Acknowledgments.} Supported by the grant PID2021-124473NB-I00, ``Algorithmic Differential Algebra and Integrability" (ADAI)  from the Spanish MCIN/AEI /10.13039/501100011033 and by FEDER, UE. The author is grateful to M.A. Zurro for numerous stimulating discussions, to A. Jiménez Pastor for computations of centralizers with the \texttt{dalgebra} package in SageMath and to the anonymous referee whose remarks allowed to improve the exposition. 

\bibliographystyle{plain}

\bibliography{Bibliography.bib}

\end{document}